\newtheorem{theorem}{Theorem}
\newtheorem{corollary}{Corollary}
\newtheorem{proposition}{Proposition}
\newtheorem{lemma}{Lemma}
\theoremstyle{plain}
\theoremstyle{remark}
\newcommand\ind{\mathds{1}}     
\DeclareMathOperator{\Var}{Var}
\DeclareMathOperator{\cov}{cov}
\newcommand{\argmin}{\operatornamewithlimits{\arg\min}}
\newcommand{\diff}{\mathrm{d}}
\begin{document}

\title{Nearest neighbor empirical processes}

\author{{Fran\c{c}ois} {Portier}\footnote{email: francois.portier@gmail.com}\\
ENSAI, CREST, University of Rennes}






%

%
\maketitle

\begin{abstract}
{In the regression framework, 
the empirical measure based on the responses resulting from the nearest neighbors, among the covariates,}
to a given point $x$ is introduced and studied as a central statistical quantity. First, the associated empirical process is shown to satisfy a uniform central limit theorem under a local bracketing entropy condition on the underlying class of functions reflecting the localizing nature of the nearest neighbor algorithm. 
Second a uniform non-asymptotic bound is established under a well-known condition, often referred to as Vapnik-Chervonenkis, on the uniform entropy numbers. The covariance of the Gaussian limit obtained in the uniform central limit theorem is simply equal to the conditional covariance operator given the covariate value. 
This suggests the possibility of using standard formulas to estimate the variance by using only the nearest neighbors instead of the full data.
%
{This is illustrated on two problems: the estimation of the conditional cumulative distribution function and local linear regression.}\\

\noindent\textbf{Primary} {62G05}, 62G08; \textbf{secondary} {62G20}\\
\noindent\textbf{Keywords:} Concentration inequality, empirical process theory, nearest neighbor algorithm, weak convergence

\end{abstract}

\section{Introduction}

The nearest neighbor algorithm is one of the most intuitive yet powerful nonparametric regression methods used to predict an output based on some features or covariates. While the resulting estimator is known to match the optimal convergence rate for nonparametric regression \citep{gyorfi1981rate,jiang2019non}, it also takes advantage of the local intrinsic structure of the feature space \citep{kpotufe2011k}. In contrast to the Nadaraya-Watson alternative \citep{nadaraya1964estimating,watson1964smooth}, the nearest neighbor method is adaptive to the features probability density function, as a large bandwidth is automatically set in low density region, making the asymptotic variance independent of the probability density value \citep{mack1981local}. The method is flexible because it can easily be improved by some metric learning procedures \citep{weinberger2006distance},  be implemented in parallel on different computers, allowing for its use in large-scale learning problems \citep{devroye1980consistency,qiao2019rates}, be used to estimate residual variance \citep{devroye2018nearest} or sparse gradients \citep{ausset2021nearest} in regression models.

{
Given a collection of $n\geq 1$ elements $(X_1, Y_{1} ) ,\ldots ,(X_n, Y_{n})  $, where $X_i\in  \mathbb R^d$ is the covariate and $Y_i \in \mathbb R$ is the response, and a parameter $1\leq k \leq n$, the $k$-nearest neighbor ($k$-NN for short)  regression estimate at point $x\in \mathbb R^d$  is defined as
$$ k^{-1} \sum_{i  \in   N_{n,k}(x) } Y_i ,$$
where  $ N_{n,k}(x)$ is the index set of the $k$-nearest neighbor to point $x$ among $X_1,\ldots , X_n$ (if there are ties use the lexicographic order).} The $k$-NN algorithm was initially introduced in \cite{fix1989discriminatory,royall1966class,cover1967nearest}  and has since been the subject of many studies in the statistical and machine learning literature. Pointwise, uniform and $L_1$ consistency of $k$-NN regression are respectively studied in \cite{stone1977consistent}, \cite{devroye1978uniform} and \cite{devroye1994strong}. The rate of convergence (in probability) is investigated in \cite{gyorfi1981rate} and an asymptotic normality result is established in \cite{mack1981local}. 
More recently, bounds on the $L_2$-risk have been obtained for a bagged version of the $k$-NN regression rule \citep{biau2010rate}. The excess error probability associated with the $k$-NN classifier have been examined in \cite{doring2017rate,gadat2016classification,cannings2020local} where a particular attention is given to unbounded covariates domains. Non-asymptotic uniform concentration bounds, matching the optimal nonparametric convergence rate \citep{stone1982optimal}, have been derived for $k$-NN regression \citep{jiang2019non}. Other works consider intrinsic properties of $k$-NN such as the size and volume of the $k$-NN balls \citep{devroye1977strong}  or the $k$-NN cells, i.e., the elements of the Voronoi partition resulting from the covariates sample \citep{devroye2017measure}.  
For a broader literature review dealing with $k$-NN density estimators, regression and classification, we refer to the textbooks \cite{gyorfi2006distribution} and \cite{biau2015lectures}.


{
 The object of interest in this work is called the $k$-\textit{NN empirical measure} and is defined as
\begin{align*}
\hat  \mu_{n,k,x} ( A ) :  =k ^{-1}    \sum_{i \in  N_{n,k}(x)} \ind _ A(Y_i)    ,\qquad   A \in \mathcal S, 
\end{align*}
where, from now on, $Y_i$ will be valued in $(S,\mathcal S)$, a general measurable space. Following the rationale of \textit{local averaging} lying behind the nearest neighbor selection procedure \citep{gyorfi2006distribution}, the $k$-NN empirical measure is an estimate of $\mu_x$, the (regular) conditional measure of $ Y$ given $X = x$ (which further is supposed to exist). 
The error between two measures will be evaluated using some class $\mathcal G$  of real valued measurable functions on $(S,\mathcal S)$.
For any such class, the $k$-NN measure induces a (random) map defined by
$ \hat  \mu_{n,k,x} ( g ) : = \int g \, \diff \hat \mu_{n,k,x}$,  for any $g\in\mathcal G$. We have the following bias-variance decomposition
\begin{align*}
  ( \hat \mu _{n,k,x}  - \mu _x) (g ) &=   k ^{-1} \sum_{i \in  N_{n,k}(x)}   (  g(Y_i )   -\mu_{X_i} (g)   )  + k ^{-1} \sum_{i \in  N_{n,k}(x)}   (\mu_{X_i} (g)   - \mu_x (g) ) ,
\end{align*}
where the first term will be called the $k$-NN process and the second term shall be called the bias term.
Although some upper-bounds will be obtained for the bias term, the most difficult part, and therefore the main contribution of the paper, will be to examine the $k$-NN process which carries-out the stochastic variability of the estimation error. Remark that the previous definition is comparable to that of standard empirical processes \citep{wellner1996} though a square-root of $k$ instead of a square-root of $n$ will be needed as a normalizing factor for weak convergence. Note that completely different statistical processes with similar names were introduced in \cite{bickel1983sums} for goodness-of-fit purpose as well as in \cite{datta2016hierarchical} for application in spatial statistics.


}

From a practical point of view, the $k$-NN measure is of great interest because many meaningful statistical objects can be examined through it. The standard $k$-NN regression rule takes the form $\int y \, \diff \hat  \mu_{n,k,x}(y)$ while alternative robust versions might be expressed as $ \min_{\theta } \hat \mu_{n,k,x} (\psi_\theta)$, with for instance $\psi_{\theta} (y) = |y-\theta|$, as studied in \cite{hardle1988robust} when using Nadaraya-Watson weights. Moreover, the $k$-NN estimate of the conditional distribution function is given by 
$\hat F_{n,k,x} (y) = \hat  \mu_{n,k,x} ( \ind_{(-\infty , y] } ) $, $ y\in {\mathbb R}$,
from which we can easily deduce $k$-NN quantiles $ u \mapsto \hat F_{n,k,x} ^-  (u)$ where $F^-$ stands for the generalized inverse of $F$. Similarly, $k$-NN versions of conditional copula \citep{veraverbeke2011estimation}, Nelson-Aalen or Kaplan-Meier \citep{beran1981nonparametric,dabrowska1989uniform} estimates might be obtained from applying the corresponding transformation to the $k$-NN measure. A summary of  several such useful transformations is given in \citep[Section 3.9.4]{wellner1996}.

Two main results will be obtained in the paper:
\begin{itemize}
\item [(i)] \textit{\textit{Uniform central limit theorem}.}  Under some conditions on $\mathcal G$, we show that the $k$-\textit{NN process}, when multiplied by square-root $k$, converges weakly to a Gaussian process with covariance function being simply the conditional covariance of $Y$ given $X = x$. This property is obtained under a condition on the bracketing numbers of the space $(\mathcal G, L_2(Q))$ for some probability measure $Q$. Our condition bears resemblance to usual bracketing entropy bounds useful to obtain  Donsker's type theorem \citep[Theorem 2.11.23]{wellner1996} except that our bound needs to hold uniformly in $Q$ belonging to some neighbourhood of the conditional measure $\mu_x$. This might be seen as the price to pay for not observing data distributed under $\mu_x$. Finally, as the Gaussian limiting process has a particularly simple covariance function (contrary to that of the Nadaraya-Watson-like measure), it suggests easy ways to make inference extending available variance formula or bootstrap procedures valid for the standard empirical process to the local $k$-NN process. {This will be discussed considering the conditional cumulative distribution function estimator as well as the local linear regression estimator, both resulting from the $k$-NN measure.}
\item [(ii)] \textit{\textit{Uniform non-asymptotic upper-bound}.} 
 To complement the previous asymptotic result, we obtain a non-asymptotic bound on the $k$-NN process holding uniformly over $g\in \mathcal G$ and $x\in \mathbb R^d$. The main working condition is a standard Vapnik-Chervonenkis (VC)-type assumption \citep{nolan+p:1987,gine+g:02}  on the covering numbers of the class $\mathcal G$ allowing us {to use Dudley's entropy integral bound \citep{dudley1967sizes} coupled with Bousquet's version \citep{bousquet2002bennett} of Talagrand's inequality \citep{talagrandNewConcentrationInequalities1996}}. The upper-bound that is obtained is sharp as it matches (up to a $\log(n)$ factor) the optimal rate of convergence for nonparametric regression.  {Moreover, the constants involved in	 the bound are either explicit or well identified elements such as the dimension $d$,  the number of samples $n$, the number of neighbors $k$, or the VC parameters of the class $\mathcal G$.}
 
\end{itemize}

Our uniform central limit theorem is a functional version of the central limit theorem given in \cite{mack1981local} or in \cite{biau2015lectures} as Theorem 14.4. It bears resemblance with the results provided in \cite{stute1986conditional,horvath1988asymptotics} in which a certain conditional empirical process is introduced and studied. Their conditional empirical process corresponds to the above $k$-NN process defined on the class $\mathcal G = \{g(Y) = \ind _{\{Y\leq y \}}\, : \, y\in \mathbb R\}$ making our framework more general. In addition, their proof technique allows only to cover the case $d= 1$ due to the use of an ordering of the covariates values. Our uniform non-asymptotic upper-bound extends recent uniform and non-asymptotic results in \cite{jiang2019non} into an empirical process version. Note also \cite{hardle1988strong,einmahl2000empirical,hansen2008uniform} where uniform asymptotic bounds are obtained for Nadaraya-Watson type estimators.

As a secondary result, we obtain upper-bounds on the bias term reflecting two situations regarding the regularity of the $k$-NN measure. When the map $x\mapsto \mu_x(g)$ is Lipschitz only, then the bias term is of order $(k/n) ^{1/d}$ but when the  map $x\mapsto \mu_x(g)$ is twice-differentiable with bounded second-order derivatives then the estimation bias improves compared to before as the rate becomes $(k/n) ^{2/d}$. Those facts show that the $k$-NN algorithm takes advantage of the smoothness of the target measure. For point-wise $k$-NN regression, i.e., $g(y) = y$ and $x$ is fixed, such a result can be found in \cite{biau2015lectures}, Section 14.3. Note that the uniform bound in \cite{jiang2019non} only relies on Lipschitz regularity of the regression function and therefore cannot benefit from second-order regularity properties.

\textbf{Outline.} 
The main definitions and concepts (along with some notation) are introduced in Section \ref{sec:def}. The uniform central limit theorem, the upper-bounds on the bias term and some illustrative examples are given in Section \ref{sec:weak_cv}. The uniform non-asymptotic bound (along with some remarks) is provided in Section \ref{sec:uniform_bound}.
Based on some technical lemmas, proofs of which  are given in the Appendix, the proofs of the main results (Theorem \ref{weakcv:kNN}, \ref{bias_bound} and \ref{consistenc:kNN}) are presented in Section \ref{section:proofs}. 

\section{Mathematical background and notation} \label{sec:def}
{
In this section, the probabilistic framework of the paper is introduced. Several well-known concepts such as bracketing numbers, covering numbers and VC classes are described and illustrated  with the help of two standard theorems that will be used in the proofs of the paper.
}

\subsection{Probabilistic framework}

All the results of the paper are based on the following assumption which requires the random variables of interest to be independent and identically distributed. We shall also assume throughout that $X$ has a density with respect to the Lebesgue measure $\lambda$ defined on $\mathbb R^d$.

\begin{itemize}
\item[(A)] Let $(S,\mathcal S)$ be a measurable space. Let $(X,Y) , (X_i, Y_{i} )_{i \geq 1 }$ be a sequence of independent and identically distributed random elements with common distribution $P$ on $( \mathbb R^d \times S ,\mathcal B ( \mathbb R^d)  \otimes \mathcal S) $. Let $P_X$ denote the probability measure of $X$. We have for all $B\in \mathcal B ( \mathbb R^d)$,  $P_X(B) = \int _B f_X  \diff \lambda$ where $f_X$ is called the density function of $X$.
\end{itemize}


 We now introduce  $\mu_x$ as the regular conditional distribution of $Y | X =x $ (which further is supposed to exist), that is:
\begin{itemize}
\item  for $P_X$-almost every $x \in \mathbb R^d$,  $\mu_x$ is a probability measure on $(S,\mathcal S)$;
\item for all $A\in \mathcal S$, $x\mapsto \mu_x(A)$ is measurable;
\item  for all $A\in   \mathcal S $, $B\in   \mathcal B ( \mathbb R^d)  $,  $  P [ (Y,X)\in A\times B ]   
=\int_B \mu_x(A)     f_X (x)\,  \diff x$.
\end{itemize}
The space $\mathbb R^d$ is endowed with the Euclidean  norm $\|\cdot \|$. For $x\in \mathbb R^d$ and $\tau >0$,  $ B(x,\tau)$ denote the set of all points $z\in \mathbb R^d $ satisfying $\|x-z \|\leq \tau$. The unit ball volume is $V_d  = \lambda (B(0,1) )$. The quantity $\ind _A (x)$ is $ 1$ if $x\in A$ and $0$ elsewhere. For any measure $\mu$ on $\mathcal S$ and real function $g$ defined on $S$, 
$\mu(g) = \int g\diff \mu$ and $\Var_\mu (g) = \mu(g^2) - \mu(g) ^2$. For any set $T$, $\ell^{\infty}(T) $ denote the space of all uniformly bounded real functions defined on $T$. 
For any sequence of random variables $(Z_n)_{n\geq 1} $ and any sequence of positive numbers $(a_n)_{n\geq 1}$, we write $Z_n = o_P(a_n)$  if $ P (Z_n  > a_n \epsilon) \to 0$ for any $\epsilon>0$. We write $Z_n=O_P(a_n) $ if for any $\epsilon >0 $, there is $A>0$, such that
$ P (Z_n   > A a_n ) \leq \epsilon $.

\subsection{Bracketing numbers}
{

Given a probability measure $Q$ on $(S,\mathcal S)$, the metric space of squared-integrable functions with respect to $Q$ is defined as
\begin{align*}
L_2(Q) = \left\{ g: S\mapsto \mathbb R \text{ such that  } \| g\|_{L_2(Q)} ^ 2 :=  Q(g^2)  <  \infty \right\} . 
\end{align*} 
Let $\underline{ f}$ and $ \overline{f}$ be two functions in $L_2(Q)$. The set $[\underline{ f}, \overline{f}]$, called a bracket, denotes the set of all functions $g $ in $L_2(Q)$ such that $\underline{ f} \leq g\leq  \overline{f} $. A bracket $ [\underline{ f}, \overline{f}]$ such that $\| \underline{ f} -  \overline{f} \| _{L_2(Q)} \leq \epsilon$ is called an $\epsilon$-bracket. Given $\mathcal G \subset L_2(Q)$, the $\epsilon$-bracketing number, denoted $\mathcal N _{[\,]} (\mathcal G , L_2(Q) , \epsilon)$, is defined as the smallest number of brackets of radius $\epsilon > 0$ needed to cover $\mathcal G$. We next call an envelope for $\mathcal G$ any function $G:S\mapsto \mathbb R$ that satisfies $|g(x)| \leq G(x)$ for all $x\in S$ and $g\in \mathcal G$. Those envelope functions are useful to normalize bracketing numbers such that the following bracketing integral
$$ J(\mathcal G,  L_2(Q),\delta ) =  \int _ 0 ^{\delta}   \sqrt { \log\left(  \mathcal N_{[\,]} \left(\mathcal G,  L_2(Q)  ,  \epsilon \| G \| _ {L_2(Q)}   \right) \right)} \,\diff \epsilon$$
is constant as soon as $\delta\geq 2$ as a single bracket, $[-G,G]$, can be used to cover $\mathcal G$ with size $ 2\|G\| _ {L_2(Q )} $. 

Bracketing numbers will be key to obtain, in Section \ref{sec:weak_cv}, the weak convergence property of the $k$-NN process. Note that bracketing numbers are known to be a powerful tool to prove the weak convergence of the standard empirical process. We now recall \citep[Theorem 2.11.23]{wellner1996} which is useful for class of functions that might depend on $n$.

\begin{theorem}\label{vdvw_weakcv}
Let $(Z_i)_{i\geq 1}$ be a sequence of independent and identically distributed random variables with common distribution $P$. For each $n$, let $\mathcal F_n = \{f_{n,t} \, : \,  t\in T\}$ be a class of measurable functions, with measurable envelope $F_n$, indexed by a totally bounded semimetric space $(T, \rho)$. Suppose that $( PF_n ^2)_{n\geq 1}$ is bounded and for every positive sequence $\delta_n \to 0$ and $\eta >0$,
	\begin{align*}
	& \limsup _{n\geq 1} P(F_n^2 \ind _ { F_n > \eta \sqrt n   } ) = 0,\\
& \limsup _{n\geq 1}  \sup_{\rho(t,s)\leq \delta_n } P (f_{n,t} - f_{n,s} ) ^2  = 0  ,\\ 	
& \limsup _{n\geq 1}  J(\mathcal F_n,  L_2(P),\delta _n ) = 0 .
	\end{align*}
Then, the sequence $\{ (1/  \sqrt n)   \sum_{i=1} ^n   \{ f_{n,t} (Z_i )  - P (f_{n,t}) ] \}  : t \in  T\} $  is asymptotically tight in $\ell ^ \infty (T) $  and converges in distribution to a tight Gaussian process provided the sequence of covariance functions $P ( f_{n,t} f_{n,s}  ) - P f_{n,t}  P f_{n,s} $ converges pointwise on $ T\times T$.
\end{theorem}


An important example of classes of functions that satisfies the above condition on the bracketing number is the class of cells $ \mathcal I = \{\mathds 1_{\{\cdot\leq y \}}$, $y\in \mathbb R\} $, for which $\mathcal N _{[\,]} (\mathcal I , L_2(Q) , \epsilon)\leq 4 /\epsilon^2 $ as shown in  Example 2.5.4 in \citep{wellner1996}. As a consequence, the use of bracketing numbers allows to recover the well-known Donsker theorem which claims the weak convergence of the re-scaled sequence of empirical cumulative distribution functions.

}

\subsection{Covering numbers and VC classes}

{
Given $\mathcal G \subset L_2(Q)$, the $\epsilon$-covering number, denoted $\mathcal N (\mathcal G , L_2(Q) , \epsilon)$, is defined as the smallest number of closed balls of radius $\epsilon > 0$ needed to cover $\mathcal G$. 
A class $\mathcal G$ of pointwise measurable \citep[Example 2.3.4]{wellner1996} real-valued functions on a measurable space $(S, \mathcal S)$ is said to be VC with parameters $(v, A) \in (0, \infty)\times [1, \infty)$ and envelope $G$ if for any $0 < \epsilon < 1$ and any probability measure $Q$ on $(S, \mathcal S)$, we have
	\begin{equation*}
	\mathcal N \left(\mathcal G,  L_2(Q)  ,  \epsilon \| G \| _ {L_2(Q) } \right) \le (A/\epsilon)^{v}.
	\end{equation*}
The class of cells $\mathcal I$ defined above is VC with parameter $v = 2$ \citep[Example 2.5.4]{wellner1996}. Another example, which  will be central to our study, is the class of Euclidean balls. As shown in Lemma \ref{vc:pres3} given in the Appendix, this class is VC with parameter $v = 2( d+1) $.

 VC classes will be used, in Section \ref{sec:uniform_bound}, to obtain a non-asymptotic upper-bound on the $k$-NN process. Note that this concept has already been employed to derive excess risk bounds for empirical risk minimizer \citep{boucheron2005theory,bartlett2005local} or uniform upper-bounds on the estimation error for kernel regression estimators \citep{nolan+p:1987,gine+g:02,gineConsistencyKernelDensity2001} or for multiple ordinary least-squares procedures \citep{plassier2023risk}. While different definitions of VC classes exist, here we rely on the definition used in the previous references which is based on the covering numbers. This notion is more general than the one initially introduced in \cite{vapnik2015uniform} as explained for instance  in \cite[Theorem 2.6.4]{wellner1996}.

The next non-asymptotic concentration inequality is tailored to VC classes of functions. The proof, given in the Appendix, relies on  Bousquet's version \citep{bousquet2002bennett} of Talagrand's inequality \citep{talagrandNewConcentrationInequalities1996} and Dudley's entropy integral bound \citep{dudley1967sizes}.

\begin{theorem}\label{vc_bound}
Let $(Z_1,\ldots ,Z_n) $ be an independent and identically distributed collection of random variables  with common distribution $P$ on $ (S,\mathcal S)$. Let $\mathcal G$ be a VC class of functions with parameters $v\geq 1 $, $A\geq 1$ and uniform envelope $U\geq \sup_{g\in \mathcal G,\, x\in S}  |g(x)|$. Let $\sigma$ be such that $\sup_{g\in \mathcal G} \Var_P(g) \leq \sigma^2  \leq U^2 $.  For any $n\geq 1$ and $\delta\in (0,1) $, it holds, with probability at least $1-\delta$,
\begin{align*}
&  \sup_{g\in \mathcal G} \left| \sum_{i=1} ^n   \{g (Z_i )  -  P (g )  \} \right|  \leq  K_1 
 \sqrt{  vn \sigma^2   \log\left(  9AU  /  (\sigma \delta)    \right)      }  +   K_2   Uv   \log\left(  9AU  /  (\sigma \delta)     \right)  ,
  \end{align*} 
	with $K_1 =    5C   $,  $K_2 =   64 C^2    $ and  $C=  9 $. 
\end{theorem}

A similar inequality is obtained in \cite{gineConsistencyKernelDensity2001}, Proposition 2.2, but using universal unknown constants in place of the  above explicit constants $K_1$ and $K_2$. 
Another related inequality, with explicit constants, valid for the expected value of such supremum and for functions satisfying $P(g) = 0$, can be found in \cite{gine2009}, Proposition 3.


}

\section{Weak convergence under bracketing entropy}\label{sec:weak_cv}

\subsection{Statement of the result}\label{subsec:weak_cv}


The weak convergence property of the $k$-NN process is obtained under the following metric entropy condition. For any $u > 0$, define the probability measure
\begin{align*}
 \mu_{x, u} ( A) := \frac{ E (  \mu_X (A)   \ind _{  B(x , u^{1/d} )  }  (X) )  }{  E (   \ind _{  B(x , u^{1/d} )  }  (X) )   }  ,\qquad A\in \mathcal  S.
\end{align*}
For some $\delta>0$ and for any positive sequence $(\delta_n)_{n\geq 1}$ going to $0$, it holds that
	\begin{equation}\label{def:brack_cond}
\limsup _{n\geq 1}   \sup_{ |u|\leq  \delta   } J( \mathcal G,  L_2(\mu_{x,u} )  ,  \delta_n)  =  0, 
	\end{equation}
	where $J$ is the bracketing integral introduced in the previous section.
Another assumption, known as the Lindeberg condition, is standard to obtain the weak convergence of the empirical process. In our setting, the Lindeberg condition takes the form of a uniform integrability assumption with respect to a given neighborhood of the measure $\mu_x$. More precisely, we ask that for some $\delta>0$ and an envelope $G$ for the class $\mathcal G$: for all $\epsilon $, there is $M > 0$ such that
\begin{align}\label{def:ui_local}
\sup _{| u| \leq \delta  } \mu_{x,u} ( G^2 \ind _ {G > M })  \leq \epsilon.
\end{align} 
We can now state our weak convergence result using 
the semimetric $\rho_\delta ( g_1,g_2) = \sup_{ |u| \leq \delta}   \|  g_1 - g_2 \|_{L_2(  \mu_{x, u})}   $. The proof of the next theorem is given in Section \ref{section:proofs} up to two technical Lemmas, namely Lemma \ref{prop:tau_x} and Lemma \ref{lemma:brecketing_cond}, that are proved in the Appendix.

\begin{theorem}\label{weakcv:kNN}
{Suppose that (A) is fulfilled.
Let $x\in \mathbb R^d$ and assume that $f_X$ is 
continuous at $x$ and $f_X(x) >0$.
Suppose there is $\delta >0 $ and an envelope $G$ for the class $\mathcal G$ such that \eqref{def:brack_cond} and \eqref{def:ui_local} are satisfied and $(\mathcal G,\rho_\delta )$ is totally bounded. Suppose also that for any pair $(g_1,g_2)$ in $\mathcal G$, the map $z\mapsto (\mu_z(g_1g_2) , \mu_z(g_1)  ,    \mu_{z} (G^2) )  $ is continuous at $x$. If, in addition, $k:=k_n$ satisfies $   k \to \infty$ and $k/n \to 0 $, it holds that
\begin{align*}
\left\{  k ^{-1/2} \sum_{i \in  N_{n,k}(x)}   (  g(Y_i )   -\mu_{X_i} (g)   )\right\}_{g\in \mathcal G}  \text{  converges weakly in $\ell^{\infty}(\mathcal G)$ to a Gaussian process}
\end{align*}
  with covariance function $\cov_x: (g_1,g_2) \mapsto \mu_x(g_1 g_2) -\mu_x(g_1 ) \mu_x (g_2) $.
}

\end{theorem}

\subsection{Discussion and remarks}\label{subsec:weak_cv_rk}

\textbf{Sketch of the proof.} For $n\geq 1$ and $k\in\{1,\; \ldots,\; n\}$, the $k$-NN bandwidth at $x$ is denoted by  $\hat \tau_{n,k,x} $ and defined as the smallest radius $\tau \geq  0$ such that the ball $ B(x,\tau)$ contains at least $k$ points from the collection $\{ X_1,\ldots,X_n\}$. That is,
\begin{align*}
\hat \tau_{n,k,x} : =  \inf   \{ \tau\geq 0 \, :\,  \sum_{i=1}^n \ind _{ B(x,\tau) }(X_i) \geq k \} .
\end{align*}
In the proof, we use the  $k$-NN bandwidth to write the $k$-NN measure as
\begin{align*}
\hat  \mu_{n,k ,x} ( A ) =  k^{-1} \sum_{i=1}^n \ind _ A(Y_i) \ind _{  B(x,  \hat \tau_{n,k,x}   )  }(  X_i).
\end{align*}
To obtain an accurate non-random estimate $  \tau_{n,k,x}$ of $\hat \tau_{n,k,x} $, one may leverage the empirical process equicontinuity \citep{wellner2007} to guarantee that $\hat  \mu_{n,k ,x} $ has the same behaviour as a simpler measure, defined in the same way, but using $ \tau_{n,k,x}$ instead of $\hat \tau_{n,k,x} $. To conclude the proof, it remains to establish the desired weak convergence property for this simpler measure. This remaining point will follow from using weak convergence results for functions classes that might change with $n$ as in Theorem \ref{vdvw_weakcv} given in Section \ref{sec:def}.

\textbf{The limiting covariance.}
The covariance of the limiting Gaussian proceess is particularly simple and meaningful: while the standard empirical process limiting covariance has the form $\cov(g_1,g_2)$ where the covariance is associated with the underlying probability distribution, the limiting covariance obtained in Theorem \ref{weakcv:kNN} is just the same but with the conditional covariance operator (given $X = x$) hence switching from the full distribution to a localized version. One important practical implication is that formulas for the variance of statistics resulting from the $k$-NN measure might be deduced easily from existing formulas valid for statistics obtained from the standard empirical measure.
{More precisely, we support the following plug-in principle: variance estimates should be computed similarly to variance estimates of standard approaches, but instead of computing them with respect to the full data, one would only use  the $k$-nearest neighbors (two examples are given in the next section). }


\textbf{Comparison to the Nadaraya-Watson estimator.}
Also known as kernel smoothing regression, the Nadaraya-Watson approach is similar to the one of $k$-NN in that the resulting estimator writes as a weighted sum with weights that decrease with the distance to the point of interest. Given a bandwidth $\tau>0$, the Nadaraya-Watson counterpart to the $k$-NN measure $\hat \mu_{n,k,x} $  (defined in the introduction) would be the following measure 	
\begin{align*}
\hat  \nu_{n,\tau ,x} ( A ) :  = \frac{   \sum_{i=1}^n \ind _ A(Y_i) \ind _{  B(x,  \tau  )  }(  X_i) }{  \sum_{i=1}^n \ind _{ B (x,  \tau )  }(  X_i) }  ,\qquad   (A \in \mathcal S).
\end{align*}
In the above formulation, the bandwidth $\tau$ no longer depends on the sample as opposed to the $k$-NN formulation. This makes the analysis easier even though one should examine the behaviour of the random denominator $ \sum_{i=1}^n \ind _{ B (x,  \tau )  }(  X_i) $.
In fact a similar result to Theorem \ref{weakcv:kNN} might be obtained but the asymptotic variance shall be different due to the fixed bandwidth $ \tau$ and the random denominator. A quick inspection reveals that the asymptotic covariance of the normalized process $  \sqrt {n\tau ^d}  (\hat  \nu_{n, \tau,x} ( g )   -   \mu_{x} ( g ) )$ is given by
$ \cov_x(g_1,g_2) / (V_d  f_X(x) )$. 
It differs significantly from the one  of the $k$-NN process as the Nadaraya-Watson variance is badly affected by low density region. {We do not claim the $k$-NN estimator to be superior to the Nadaraya-Watson estimator in terms of prediction performance. Our point is only that parametrizing local averaging using the number of neighbors $k$ instead of the size of the balls $\tau $ makes the inference easier as the variance term does not depend on the covariates density.
The final performances of the two regression estimators actually depend on the respective choice of $k$ and $ \tau $, which, if conduced similarly, are likely to produce similar results.
In fact, the $k$-NN measure might be seen as a Nadaraya-Watson measure where the bandwidth  $\tau$ is set equal to $ ( k  / (n  V_d f_X(x) ) )^{1/d} $. This connection between the two approaches is  observed in Lemma \ref{prop:tau_x} where the (random) $k$-NN radius is shown to be equivalent to $ ( k  / (n  V_d f_X(x) ) )^{1/d}$ with large probability. 
}

%
%

\textbf{Discussion of the entropy condition.}
{Condition \eqref{def:brack_cond} bears resemblance with the usual bracketing entropy condition useful to derive the weak convergence of the standard empirical process, expressed for instance in  Theorem \ref{vdvw_weakcv}. One difference is that the measure involved in \eqref{def:brack_cond} is the conditional measure $\mu_x$ instead of the common underlying probability measure (denoted by $P$ in the aforementioned theorem).  Another difference is that our integrability condition must hold uniformly in $Q$ belonging to the set of measures $\{\mu_{x,u} \, :\, |u|\leq \delta\}$ neighborhing the measure $ \mu_x$. {This might seem restrictive at first glance. However, for many classes of interest, e.g., class of cells $\mathcal I$ as well as the class of Euclidean balls, both introduced in Section \ref{sec:def}  (see also Chapter 2.7 in \cite{wellner1996} for more examples), upper bounds on the associated bracketing numbers are  independent from the underlying probability measure. As a consequence,  \eqref{def:brack_cond} can be obtained without involving additional assumptions on the set of measures $\{\mu_{x,u} \, :\, |u|\leq \delta\}$.}
}
Another way to obtain \eqref{def:brack_cond} is given in the next proposition. It requires the existence of a dominating measure for which a standard bracketing entropy condition is satisfied.  The proof of the next proposition is deferred to the end of the Appendix, Section \ref{app:prop_suffisient_cond}.

\begin{proposition}\label{prop:suffisient_cond}
Suppose that $ z\mapsto \mu_{z} (G^2)  $ is continuous at $x$ for  an envelope $G$ of $\mathcal G$ and that there is $\beta \geq 1$ and a probability measure $ \mu$ on $(S,\mathcal S)$  such that for all  $z\in B(x,\delta) $ and $A\in \mathcal S$,    $\mu_z(A) \leq \beta \mu(A)$. Then,  $J( \mathcal G,  L_2(\mu  )  ,  \delta) < \infty$ implies \eqref{def:brack_cond}.
\end{proposition}

We conclude by providing two examples of distribution $P$ for which  the dominating measure property holds true. First, suppose that  $(Y,X)$ has a joint density $f_{Y,X}$ and assume that it is bounded and bounded away from $0$ (which implies that $X$ has compact support). Then, it is easily shown that $\mu_z(A) \leq  \beta\lambda (A)$ for some $\beta$.
A second example is the regression model in which $Y - \mathbb E [Y|X] $ has density $f$ with respect to Lebesgue measure and $| \mathbb E [Y|X]| \ind_{X\in B(x,\delta) } \leq M $ almost surely (e.g., when the regression function is continuous). Then the distribution $\mu_x$ admits a density $y\mapsto f ( y - \mathbb E [Y|X = x] )$ bounded by $h(y) = \sup_{m \in [-M,M]} f(y - m) $ which is integrable in most standard cases (Gaussian, exponential, polynomial, compactly supported). Hence, one may choose $\mu $ as the probability distribution having density  proportional to $h$.

\subsection{Bounding the bias}\label{subsec:bounding_bias}

To obtain an upper bound on the bias term, defined in the introduction, we rely on two different regularity conditions. One first possibility occurs when the function $z\mapsto \mu_z ( g  )$ satisfies a Lipschitz condition at  point $x$, uniformly in $g\in \mathcal G$, i.e., there is $\tau >0$ and $L_{1,x}>0$, such that
\begin{align}\label{first_order}
   \sup_{g\in \mathcal G}  | ( \mu_{x }  - \mu_{z}  ) ( g  ) | \leq  L_{1,x}  \|x - z\| ,\qquad \forall z \in B (x,\tau) .
   \end{align}
The strength of the previous condition actually depends on $\mathcal G$. When the collection of functions $\mathcal G$ is the set of measurable functions valued in $[-1,1]$ the previous assumption becomes Lipschitz continuity in total variation norm. When $g$ is made of Lipschitz functions, it relates to Lipschitz continuity in $1$-Wasserstein distance. As stated formally in the subsequent theorem, the previous condition ensures that the bias term has order $  (k/n)^{1/d}  $. {Interestingly, better convergence rates, in  $ (k/n)^{2 /d} $, might be obtained under a stronger regularity condition by leveraging that $x\mapsto \ind_{B(0,1)}(x)$ is even symmetric. In the regression framework, this type of analysis is proposed for instance in Section 14.3 in \cite{biau2015lectures}. In our context, due to the presence of processes indexed by $\mathcal G$, the approach taken in the proof needs to be refined compared to the previous reference. We shall need that for each $g$, $ x\mapsto  \mu_{x }(g)$ is differentiable around $x$ and that its gradient satisfies
\begin{align}\label{second_order}
     \sup_{g\in \mathcal G}  \|  \nabla _x \mu_{x } (g)  -  \nabla_x \mu_{z}   ( g  ) \| \leq L_{2,x}   \|x - z\|, \qquad \forall z \in B (x,\tau),
   \end{align}
   for some $\tau >0$ and $L_{2,x}>0$. We now give a formal statement about the previous two  cases associated with first and second order regularity assumptions, respectively, \eqref{first_order} and \eqref{second_order}. 
   
\begin{theorem}\label{bias_bound}
In Theorem \ref{weakcv:kNN}, if \eqref{first_order} is fulfilled, then 
$$ \sup_{g\in \mathcal G} \left|  k ^{-1} \sum_{i \in  N_{n,k}(x)}   (\mu_{X_i} (g)   - \mu_x (g) ) \right|    = O_P( (k/n)^{1/d} )  + o_P(k^{-1/2} )  .$$
If moreover, \eqref{second_order} is fulfilled and $f_X$ is $L_{f,x}$-Lipschitz around $x$, we have
$$  \sup_{g\in \mathcal G}\left|  k ^{-1} \sum_{i \in  N_{n,k}(x)}   (\mu_{X_i} (g)   - \mu_x (g) ) \right|   = O_P( (k/n)^{2/d} ) + o_P(k^{-1/2} ) . $$

\end{theorem}

Note that the previous might be used to characterize the weak limit of the $k$-NN measure $ \hat \mu _{n,k,x}$ under a restrictive condition on the number of neighbors. First, if \eqref{first_order} holds true and $ k^{ (d+2)/2  } /n\to 0$, then
$
\{  k ^{1/2}  ( \hat \mu _{n,k,x}  - \mu _x) (g ) \} _{g\in \mathcal G} $ converges weakly to the Gaussian process described in Theorem \ref{weakcv:kNN}. Second, under \eqref{second_order} and  the condition that $ k^{ (d +4)/4  } /n\to 0$, then 
$
\{  k ^{1/2}  ( \hat \mu _{n,k,x}  - \mu _x) (g ) \} _{g\in \mathcal G} $ 
converges weakly to the same Gaussian process. This improves upon the previous case where $x\mapsto \mu_{x }$ is assumed to be Lipschitz only.
Note that the previous weak  convergence property is obtained at the price of choosing $k$ smaller than its optimal value so that the variance term is greater than the bias term. 

}

\subsection{Applications}

{
In this section, two illustrative applications of Theorem \ref{weakcv:kNN} are considered. We start by giving a Donsker-type theorem for the $k$-NN estimator of the conditional cumulative distribution function. We thereafter introduce a certain local-linear estimator based on the $k$-NN measure and we discuss ways to estimate its variance.

\textbf{$k$-NN  conditional cumulative distribution function.} Let $F_x$ denote the conditional cumulative distribution function of $Y$ given $X=x$ and 
define
$$ \hat F_{n,k,x} (y)  =  \hat \mu_{n,k,x} (\ind_{(-\infty , y ]}).$$ 
To apply previous results, the class $\mathcal G$ is taken equal to the class of cells $\mathcal I$, introduced in Section \ref{sec:def}, which satisfies  \eqref{def:brack_cond} because of the upper bound on the associated bracketing numbers that is recalled in Section \ref{sec:def}. Since the envelope function is $1$ for this class, \eqref{def:ui_local} is easily satisfied.
To ensure that the continuity conditions of Theorem \ref{weakcv:kNN} are fulfilled, it is enough to assume that $F_z (y )$ is continuous at $z=x$ for each $y$.
We are  now in position to state the following corollary of Theorem \ref{weakcv:kNN}.

\begin{corollary}\label{cor:weak}
Suppose that (A) is fulfilled. Let $x\in \mathbb R^d$ and assume that $f_X$ is 
continuous at $x$ and $f_X(x) >0$. Suppose that for each $y\in \mathbb R$, $z\mapsto F_z(y)$ is continuous at $x$.
If, in addition, $k:=k_n$ satisfies $   k \to \infty$ and $k/n \to 0 $, it holds that
\begin{align*}
\left\{  k ^{-1/2} \sum_{i \in  N_{n,k}(x)}   (  \ind_{ Y_i\leq  y}    -\mu_{X_i} ( \ind_{(-\infty, y] } )   )\right\}_{y\in \mathbb R}  \text{  converges weakly in $\ell^{\infty}(\mathbb R)$ to a Brownian bridge}
\end{align*}
with covariance $(y_1,y_2)\mapsto  F _{x}( y_1 \wedge y_2) - F _{x}(y_1) F _{x}(y_2)$.
\end{corollary}

The covariance function given above allows for easy inference as it might be estimated by $\hat F_{n,k,x} ( y_1 \wedge y_2) -  \hat F_{n,k,x} (y_1) \hat F_{n,k,x}(y_2)$. In contrast, the Nadaraya-Watson version has the following limiting covariance 
$ ( F _{x}( y_1 \wedge y_2) - F _{x}(y_1) F _{x}(y_2) ) (V_K / f_X (x)) $
which would require an additional estimate of $f_X(x)$ to approximate the variance. 
From Corollary \ref{cor:weak}, several results might be obtained using the delta-method as in \citep[Section 3.9.4]{wellner1996}. This includes, for instance, the weak convergence of the the $k$-NN conditional quantile estimate or the $k$-NN conditional copula estimate.


\textbf{Local linear nearest neighbor.}
We now consider the ordinary least-squares problem localizing with the $k$-NN measure. These types of estimators are similar to well-known {local linear estimators \citep{fan1996} which  are traditionally defined using Nadaraya-Watson weights.} Introduce the regression function $h(x) = \mathbb E [ Y| X=x] $ and  assume that $h$ is continuously differentiable at point $x$. We consider the loss function $\ell_{ \alpha, \beta}  (Y, X ) = (Y- \alpha -\beta ^T (X-x) ) ^2$ which, when associated with the $k$-NN measure (with respect to $ x$), gives the following empirical risk minimizer
\begin{align*}
(\hat \alpha_{n,k,x}   , \hat \beta_{n,k,x}  ) \in \argmin _ {\alpha\in \mathbb R,\, \beta\in \mathbb R^  d }    \hat \mu _{n,k,x} ( \ell _{\alpha, \beta} ) . 
\end{align*}
While a central limit theorem might be obtained as in the previous example, here we prefer, for the sake of clarity, to focus on variance estimation. As detailed in \citep{fan1996}, $ (\hat \alpha_{n,k,x}   , \hat \beta_{n,k,x}  )$ is an estimate of  $(h(x) , \nabla_x h(x) )$. The plug-in principle for variance estimation described previously, leads to the following variance estimate
$$  \text{var}  (\hat \alpha_{n,k,x}   , \hat \beta_{n,k,x}  )   \simeq  \sigma^2(x)  G_x ^+,  $$ 
where $G_x =  k  \hat \mu _{n,k,x}\{ aa^T \}$,  $a (X) = (1, ( X-x)^T)^T$, is the corresponding Gram matrix and $\sigma (x) $ is the conditional variance of $ Y  $ given $X = x$, which here is supposed to be known for simplicity (we refer to \cite{devroye2018nearest} for residual variance estimation). Note that the obtained formula is the same as in classic ordinary least-squares regression, except that only the $k$-NN points are involved and $X_i - x$ is used instead of $X_i$. Therefore, if one would seek to make inference on the gradient's magnitude, it would suffice to apply well-documented significance testing approaches taking care of the two changes just mentioned.
%

}
%
%

\section{Non-asymptotic bound under uniform entropy}\label{sec:uniform_bound}

\subsection{Statement of the result}\label{subsec:uniform_bound}


We now state our second main result, a non-asymptotic bound on the error associated with the nearest neighbor measure $\hat \mu _{n,k,x}$ estimating $\mu_x$. The proposed bound on the $k$-NN process  holds uniformly over $g$ lying in a VC class and over $x\in S_X$. Because of the last point, the assumptions now apply to the entire set $S_X$ in contrast with the previous section. More specifically, we require that there is $c>0$ and $T>0$ such that 
\begin{align}\label{cond:reg1}
&\lambda (S_X \cap  B(x, \tau ) ) \geq c \lambda   ( B(x, \tau )) , \qquad \forall \tau \in (0,T] , \, \forall x\in S_X,
\end{align}
and there is $0 < b_X\leq U_X <+\infty$ such that
\begin{align}\label{cond:reg2}
& b_X\leq f_{X}(x) \leq U_X , \qquad \forall x \in S_X .
\end{align}
The first above condition is meaningful for small $\tau>0$ and hence deals with the boundary of the set $S_X$ while the second means that the measure $P_X$ charges uniformly the set $S_X$ (no region with no point).

The proof of the next theorem is given in Section \ref{section:concistency=nn} using two lemmas, namely Lemma \ref{prop:tau} and Lemma \ref{boundW}, that are proved in the Appendix.


\begin{theorem}\label{consistenc:kNN}
{Suppose that (A) is fulfilled and that \eqref{cond:reg1} and \eqref{cond:reg2} hold true. Suppose that the function class $\mathcal G$ is VC with parameter $(v,A)$ and constant envelope $1$.  Then, for all $n\geq 1$, $\delta \in (0,1)$ and $1\leq k \leq n$ such that 
\begin{align*}
8   d  \log(24 n/\delta ) \leq k  \leq  n \left\{  \left(\frac{ 2 }{ \sigma_{\mathcal G} ^2   \kappa_X }  \right) \wedge  \left( T ^d   b_X c V_d /2  \right)\right\} ,
\end{align*}
with $\kappa _X =  {U_X} / {cb_X}$, $\sigma_{\mathcal G} ^2  = \sup_{g\in \mathcal G,\, x\in S_X} \Var_{\mu_x } (g)  $,
it holds, with probability at least $1-\delta$:
\begin{align*}
&\sup_{g \in \mathcal G, \, x\in S_{X}} \left| k ^{-1} \sum_{i \in  N_{n,k}(x)}   (  g(Y_i )   -\mu_{X_i} (g)   )  \right| \\
& \leq 
 2K_1 \sqrt{   \frac{ ( d+1 +v )  \sigma_{\mathcal G} ^2 \kappa_X }{k} \log \left(  52 e^2  \frac{A n } { \sigma _{\mathcal G} \delta}    \right)  } 
+   4K_2 \frac{ ( d+1 +v ) }{k}  \log \left( 52 e^2  \frac{A n } { \sigma _{\mathcal G} \delta} \right)  
\end{align*}
with $K_1  $ and  $K_2 $ are the explicit constants from Theorem \ref{vc_bound}.
}
\end{theorem}

\subsection{Discussion and remarks}

\textbf{Rates of convergence.}
The bound given in Theorem \ref{consistenc:kNN} is concerned with the variance component and the  two terms together form a typical Bernstein bound. Following the result of Section \ref{subsec:bounding_bias}, namely Theorem \ref{bias_bound},  we recover optimal rates for respectively differentiable and twice-differentiable  functions. Indeed, if
$$\forall (x,  z)  \in S_{X} \times S_{X} ,\qquad  \sup_{g\in \mathcal G} | \{ \mu_x   - \mu_{z} \}(g)  | \leq  L_1 \|x-z\|,$$
 then the bias is of order $(k/n)^{1/d}$ and hence, as for the $k$-NN regression estimator \citep{gyorfi1981rate,jiang2019non}, optimal rates of convergence for Lipschitz functions  \citep{stone1982optimal} are achieved when selecting $  k  =   n^{2 / (d+2) }   $ as it gives a uniform error of order $ n ^{ -  1 / (d+2) }  $. Under stronger regularity conditions, in particular, if $x\mapsto \mu_x(g)$ is differentiable for each $g\in \mathcal G$,  and if
 $$\forall (x,  z)  \in S_{X} \times S_{X} ,\qquad  \sup_{g\in \mathcal G} |  \nabla _x \mu_x (g)  -  \nabla _x \mu_{z} (g)  | \leq  L_1 \|x-z\|,
$$ 
then the bias term is of order $(k/n)^{2/d}$. This can be deduced from the proof of the second assertion in Theorem \ref{bias_bound} using tools from the proof of  Theorem \ref{consistenc:kNN}. In this case, the optimal rate of convergence is $ n ^{ -  2 / (d+4) }$ when choosing $k = n^{4/ (d+4)} $.

{\textbf{Constants.} The previous non-asymptotic bound depends explicitly on some characteristic of the problems (e.g., density conditioning number $\kappa_X$ or the VC class parameters) as well as  known absolute constants $( C,K_1,K_2 )$. Even though the constants are not meant to be optimal we emphasize that in some scenarios one may be able to approximate (if not compute) the given upper-bound.}

\textbf{Sketch of the proof.}
The approach employed in the proof of Theorem \ref{consistenc:kNN} relies on controlling the complexity of the nearest neighbors selection mechanism. The basic idea is to embed the nearest neighbors selector around $x$, namely $\ind_ {B(x, \hat \tau_{n,k,x}  ) } $, in a larger class of kernel functions, $\ind_ {B(x,\tau )} $ where $\tau $ lies between $0$ and $(k/n)^{1/d}$ (up to some factor specified in the proof). Since this class of kernel functions has a similar VC dimension as the class of balls studied in \cite{wenocur1981some}, we are able to use Theorem \ref{vc_bound}.

\textbf{From uniform regression to uniform bounds on processes.}
A related result, where the bound obtained is also uniform in $x$, has been obtained in Theorem 1 of  \cite{jiang2019non}. The difference is that Theorem \ref{consistenc:kNN} above is valid for the $k$-NN measure (involving a function class $\mathcal G$) whereas the result in  \cite{jiang2019non} is obtained in the more specific case of (Lipschitz) regression. Comparing the two approaches, the most significant difference turns out to be in the variance term (which is examined in Theorem \ref{consistenc:kNN}). More precisely, there is a new term, $(d\vee v)$, that shows-up in our bound compared to Jiang's bound where only $d$ is needed. The factor $v$ accounts specifically for the complexity of the class $\mathcal G$ while the factor $d$ comes from the uniform requirement with respect to $x$. If $\mathcal G$ would be made of $1$ element, then only $d$ would be needed in the bound as in  \cite{jiang2019non}. In addition, in Theorem \ref{consistenc:kNN}, the scaling condition between $k$, $n$, $d$ and $\delta$, needed in \cite{jiang2019non}, Theorem 1, has been improved from $d\log(1/\delta)^2\log(n) \leq k $ to $d\log(n/\delta) \leq n $ (taking all constants equal to $1$ for clarity).

{\textbf{Low density regions.}
Theorem \ref{consistenc:kNN} relies on several good  properties of the covariate density $f_X$ which in particular should not approach $0$. 
This assumption is key because it allows to obtain a rate of convergence for the $k$-NN radius $\hat \tau_{k,n,x}$ of order $(k/n)^{1/d}$. This was also the case in Theorem \ref{weakcv:kNN} because $f_X(x)$ ($x$ is fixed) was assumed to be positive. When this assumption is no longer valid, it might be difficult (perhaps not possible) to obtain the same rate as the ones described in the previous results. However,  it might be interesting to examine errors that are averaged over the covariates domain such as the $L_2$-risk. Note that several related results have already been obtained concerning the analysis of the $k$-NN classification rule when covariates have low density regions. Excess risk bounds are examined in \cite{gadat2016classification} and \cite{cannings2020local} under different margins conditions.}

\section{Proofs of the main results}\label{section:proofs}

We now give the proofs of Theorem \ref{weakcv:kNN}, \ref{bias_bound} and \ref{consistenc:kNN} by relying on several technical lemmas, namely Lemma \ref{prop:tau_x}, \ref{lemma:brecketing_cond}, \ref{prop:tau} and \ref{boundW}, which proofs are given in the Appendix.

\subsection{Proof of Theorem \ref{weakcv:kNN}}\label{section:proofs_weak}

The proof is made up of two steps, as detailed below.

 \textbf{Step (i): approximating the $k$-NN radius.}
  We set
\begin{align*}
 \tau_{n,k,x}  = \left(  \frac k n \frac 1 {f_X(x) V_d} \right) ^{1/d}   .
\end{align*}
Next we claim that $\hat \tau_{n,k,x}  $ is equivalent to $  \tau _{n,k,x} $, in probability. The proof can be found in the Appendix.

\begin{lemma}\label{prop:tau_x}
Let $x\in \mathbb R^d$ and suppose that $f_X$ is continuous at $x$ and $f_X(x) >0$, if $k/n\to 0$ and $ k \to\infty   $, we have
$  (\hat \tau_{n,k,x}  /   \tau _{n,k,x}  )^d \to  1$, in probability,  as $n\to \infty$.
\end{lemma}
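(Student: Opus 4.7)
The plan is to exploit the equivalence
\[
\{\hat\tau_{n,k,x} \le \tau\} = \Bigl\{\sum_{i=1}^n \ind_{B(x,\tau)}(X_i) \ge k\Bigr\},
\]
which reduces the claim to controlling a binomial count. Writing $N_n(\tau) := \sum_{i=1}^n \ind_{B(x,\tau)}(X_i)$, we have $N_n(\tau) \sim \mathrm{Bin}(n,p_n(\tau))$ with $p_n(\tau) = P_X(B(x,\tau))$. The continuity of $f_X$ at $x$ together with $f_X(x)>0$ gives
\[
\frac{p_n(\tau)}{V_d f_X(x)\,\tau^d} \longrightarrow 1 \quad \text{as } \tau \to 0,
\]
and since $\tau_{n,k,x} \to 0$ (because $k/n \to 0$), this asymptotic ratio can be used at $\tau$'s of order $\tau_{n,k,x}$.

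Fix $\eta \in (0,1)$ and set $\tau^\pm := (1\pm\eta)^{1/d}\tau_{n,k,x}$. By the equivalence above,
\[
\prob\bigl((\hat\tau_{n,k,x}/\tau_{n,k,x})^d > 1+\eta\bigr) = \prob\bigl(N_n(\tau^+) < k\bigr),
\qquad
\prob\bigl((\hat\tau_{n,k,x}/\tau_{n,k,x})^d < 1-\eta\bigr) \le \prob\bigl(N_n(\tau^-) \ge k\bigr).
\]
Using the asymptotic equivalent of $p_n$ above, the expectations satisfy $\Exp[N_n(\tau^\pm)] = (1\pm\eta)k\,(1+o(1))$, so the deviations from $k$ are of order $\eta k$. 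I would then apply a Bernstein (or Chernoff) bound: for the binomial $N_n(\tau^+)$ with mean $m^+ \approx (1+\eta)k$ and variance at most $m^+$,
\[
\prob\bigl(N_n(\tau^+) < k\bigr) \le \exp\!\Bigl(-\tfrac{(m^+-k)^2}{2(m^+ + (m^+-k)/3)}\Bigr) \le \exp(-c_\eta k),
\]
for some $c_\eta>0$ depending only on $\eta$, and analogously for the lower tail. Since $\log(n)/k \to 0$ forces $k\to\infty$, both probabilities go to zero, proving convergence in probability.

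The only technical care needed is to check that the $1+o(1)$ factors coming from the continuity of $f_X$ do not eat the $\eta$ margin: this is handled by picking $n$ large enough so that $|p_n(\tau^\pm)/(V_d f_X(x)(\tau^\pm)^d) - 1| \le \eta/4$, which is available because $\tau^\pm \to 0$. No step here is genuinely difficult; the main obstacle, if any, is bookkeeping to ensure the Bernstein bound produces a rate of the form $\exp(-c_\eta k)$ that collapses under $k \to \infty$, which is the role of the hypothesis $\log(n)/k \to 0$ (and is more than enough for this pointwise statement).
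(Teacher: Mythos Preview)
Your proposal is correct and follows essentially the same approach as the paper: reduce to binomial counts via the equivalence $\{\hat\tau_{n,k,x}\le\tau\}=\{N_n(\tau)\ge k\}$, use continuity of $f_X$ to control the mean, and apply a Chernoff/Bernstein tail bound on each side. The paper's version additionally takes a union bound over $k=1,\dots,n$ (which is why the hypothesis $\log(n)/k\to0$ is invoked explicitly there), whereas you work directly with the single $k=k_n$ and correctly observe that $k\to\infty$ alone already suffices for this pointwise statement.
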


 \textbf{Step (ii): examining the bracketing entropy.} The function class 
\begin{align*}
\mathcal F_{n,k,x} = 
&\left\{(y ,z) \mapsto \sqrt { (n/k) }  f(y,z)     \ind _{ B(x, u^{1/d} \tau_{n,k,x})  }(z) \, :\, f\in \mathcal F ,\, u \in  [ 1/2 ,  3/2 ] \right\} 
\end{align*}
will play an important role in the proof. Next we show that the bracketing entropy of $ \mathcal F_{n,k,x} $ is small enough as soon as $\mathcal F$ satisfies the following: for some $\delta>0$ and for any positive sequence $(\delta_n)_{n\geq 1}$ going to $0$, it holds that
	\begin{equation}\label{cond_bracket2}
\limsup _{n\geq 1}   \sup_{ |u|\leq  \delta   } J( \mathcal F,  L_2(\tilde \mu_{x,u} )  ,  \delta_n)  =  0, 
	\end{equation}
 where  $J$ is the bracketing integral introduced in Section \ref{sec:def} and $\tilde \mu_{x,u}$ is similar to $ \mu_{x,u}$ but is extended to space $\mathcal S$. For any $u > 0$, 
 $$ \tilde \mu_{x, u} ( A \times B) = \frac{ E (  \mu_X (A)  \ind_ B(X) \ind _{  B(x , u^{1/d} )  }  (X) )  } {  E (   \ind _{  B(x , u^{1/d} )  }  (X) )   }  ,\qquad A \in \mathcal  S, B\in \mathcal B ( \mathbb R^d).$$  The proof of the next result is given in the Appendix.

\begin{lemma}\label{lemma:brecketing_cond}
Let $x\in \mathbb R^d$ and suppose that $f_X$ is continuous at $x$ and $f_X(x) >0$. Assume that there is $\delta >0 $ for which \eqref{cond_bracket2} holds true  with envelope $F$ such that $ z\mapsto \mu_{z} (F^2(\cdot, z))  $ is continuous at $x$. Then, if $k/n\to 0$, for any positive sequence $(\delta_n)_{n\geq 1}$ going to $0$, 
\begin{align*}
\lim_{n\to \infty}  \int_0^{\delta_n}  \sqrt { \log \mathcal N _{[\,]}\left( \mathcal F_{n,k,x}  , L_2(P) , \epsilon \| F _{n,k,x}\| _{L_2(P) }\right) }   \, \diff \epsilon = 0,
\end{align*}
where $F _{n,k,x} = \sqrt { (n/k) } F\ind _{ B (x, (3/2)^{1/d} \tau_{n,k,x}) } $ is an envelope for $\mathcal F _{n,k,x}$.
\end{lemma}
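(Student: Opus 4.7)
The plan is to exploit the product structure of $\mathcal F_{n,k,x}$: up to the scalar $\sqrt{n/k}$, every member factors as a centered function $h_g(y):=g(y)-\mu_x(g)$ depending only on $y$ through $g\in\mathcal G$, times an indicator $\ind_{B(x,u^{1/d}\tau_{n,k,x})}(z)$ in $z$ depending on the scalar $u\in[1/2,3/2]$. I build brackets for the two factors separately and assemble them into signed product brackets. Write $\tau_n=\tau_{n,k,x}$. Continuity of $f_X$ at $x$ gives $(n/k)P(X\in B(x,r^{1/d}))\to r/\tau_n^d$ uniformly for $r\in[\tau_n^d/2,3\tau_n^d/2]$, and continuity of $z\mapsto\mu_z(G^2)$ at $x$ gives $\mu_{x,r}(G^2)\to\mu_x(G^2)$ as $r\to 0$; together these yield $\|F_{n,k,x}\|_{L_2(P)}^2\to 6\mu_x(G^2)>0$, the asymptotic normalization driving all size bookkeeping below.

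For a mesh $\eta>0$ to be chosen, partition $[1/2,3/2]$ at $1/2=u_0<\cdots<u_N=3/2$ with $u_{i+1}-u_i\le\eta$ and $N\le\lceil 1/\eta\rceil$, and set $A_i^-:=B(x,u_i^{1/d}\tau_n)$ and $A_i^+:=B(x,u_{i+1}^{1/d}\tau_n)$, so that $\ind_{A_i^-}\le\ind_{B(x,u^{1/d}\tau_n)}\le\ind_{A_i^+}$ whenever $u\in[u_i,u_{i+1}]$. For each $i$, apply hypothesis \eqref{def:brack_cond} at $u_i\tau_n^d\in(0,\delta]$ (valid for large $n$ since $\tau_n\to 0$) to obtain brackets $[g_{i,j}^L,g_{i,j}^U]$, $j=1,\dots,N_{i,\epsilon'}$, covering $\mathcal G$ in $L_2(\mu_{x,u_i\tau_n^d})$ with radius $\epsilon'\|G\|_{L_2(\mu_{x,u_i\tau_n^d})}$. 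Center via $h_{i,j}^L:=g_{i,j}^L-\mu_x(g_{i,j}^U)$ and $h_{i,j}^U:=g_{i,j}^U-\mu_x(g_{i,j}^L)$ and form
\begin{align*}
F_{i,j}^L(y,z)&=\sqrt{n/k}\bigl[(h_{i,j}^L)^+(y)\ind_{A_i^-}(z)-(h_{i,j}^L)^-(y)\ind_{A_i^+}(z)\bigr],\\
F_{i,j}^U(y,z)&=\sqrt{n/k}\bigl[(h_{i,j}^U)^+(y)\ind_{A_i^+}(z)-(h_{i,j}^U)^-(y)\ind_{A_i^-}(z)\bigr].
\end{align*}
A short case analysis on the sign of $h_g(y)$ and on the location of $z$ relative to $A_i^-\subset A_i^+$ confirms that $F_{i,j}^L\le\sqrt{n/k}\,h_g(y)\ind_{B(x,u^{1/d}\tau_n)}(z)\le F_{i,j}^U$ whenever $u\in[u_i,u_{i+1}]$ and $g\in[g_{i,j}^L,g_{i,j}^U]$.

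To size the bracket, the identity $a^+-b^++b^--a^-=a-b$ yields
\begin{align*}
F_{i,j}^U-F_{i,j}^L=\sqrt{n/k}\Bigl[(h_{i,j}^U-h_{i,j}^L)\ind_{A_i^-}+\bigl((h_{i,j}^U)^++(h_{i,j}^L)^-\bigr)\ind_{A_i^+\setminus A_i^-}\Bigr],
\end{align*}
and the two pieces having disjoint $z$-supports gives
\begin{align*}
\|F_{i,j}^U-F_{i,j}^L\|_{L_2(P)}^2=\tfrac{n}{k}P(X\in A_i^-)\mu_{x,u_i\tau_n^d}\!\left((h_{i,j}^U-h_{i,j}^L)^2\right)+\tfrac{n}{k}P(X\in A_i^+\setminus A_i^-)R_{i,j},
\end{align*}
with $R_{i,j}\le 4\mu_{x,u_{i+1}\tau_n^d}(G^2)$ (using $(h^U)^++(h^L)^-\le 4G$, WLOG after replacing the envelope $G$ by $G+\mu_x(G)$). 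The first paragraph's asymptotics give $\tfrac{n}{k}P(X\in A_i^-)\le 2$, $\tfrac{n}{k}P(X\in A_i^+\setminus A_i^-)\le 2\eta$ and $\mu_{x,u_i\tau_n^d}(G^2)\le 2\mu_x(G^2)$ for large $n$, while the triangle inequality and $\mu_x(g^U-g^L)\ge 0$ yield $\|h_{i,j}^U-h_{i,j}^L\|_{L_2(\mu_{x,u_i\tau_n^d})}\le \|g_{i,j}^U-g_{i,j}^L\|_{L_2(\mu_{x,u_i\tau_n^d})}+\mu_x(g_{i,j}^U-g_{i,j}^L)\le C\epsilon'\|G\|_{L_2(\mu_{x,u_i\tau_n^d})}$, the $L_1(\mu_x)$-piece being transferred from the $L_2(\mu_{x,u_i\tau_n^d})$-piece via Cauchy--Schwarz and continuity of $\mu_z$ at $x$. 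Choosing $\epsilon'$ proportional to $\epsilon$ and $\eta$ proportional to $\epsilon^2/\mu_x(G^2)$ therefore delivers brackets of $L_2(P)$-radius at most $\epsilon\|F_{n,k,x}\|_{L_2(P)}$ and total cardinality at most $C\epsilon^{-2}\sup_{|u|\le\delta}\mathcal N_{[\,]}(\mathcal G,L_2(\mu_{x,u}),c\epsilon\|G\|_{L_2(\mu_{x,u})})$.

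Taking logarithms and integrating to $\delta_n$ splits the bracketing integral into $\int_0^{\delta_n}\sqrt{\log(C\epsilon^{-2})}\,d\epsilon$, which tends to $0$ as $\delta_n\to 0$ by elementary analysis, plus $\int_0^{\delta_n}\sqrt{\log\sup_{|u|\le\delta}\mathcal N_{[\,]}(\mathcal G,L_2(\mu_{x,u}),c\epsilon\|G\|_{L_2(\mu_{x,u})})}\,d\epsilon$, which vanishes by hypothesis \eqref{def:brack_cond}. The main obstacle I anticipate lies in the size bookkeeping: on each slab the natural bracketing measure is $\mu_{x,u_i\tau_n^d}$, which varies with $n$, and one must translate bracket radii and envelope norms $\|G\|_{L_2(\mu_{x,u})}$ into uniform statements comparable to $\mu_x(G^2)$. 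This is precisely where continuity of $z\mapsto\mu_z(G^2)$ at $x$ is decisive, preventing degeneration of the normalizations as $u\to 0$ and absorbing the annulus remainder into the $\eta$-factor.
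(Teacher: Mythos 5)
Your overall architecture coincides with the paper's: discretize $u$ over a mesh of width proportional to $\epsilon^2$ (giving the harmless $\log(1/\epsilon^2)$ contribution to the entropy integral), bracket $\mathcal G$ with respect to the localized measures $\mu_{x,u\tau_{n,k,x}^d}$ supplied by \eqref{def:brack_cond}, assemble product brackets, and use continuity of $f_X$ and of $z\mapsto\mu_z(G^2)$ at $x$ to keep all normalizations comparable to $\mu_x(G^2)$ and $\|F_{n,k,x}\|_{L_2(P)}$. Your signed construction with positive and negative parts is in fact more careful than the paper's plain product brackets $[\underline g_k\underline\ind_j,\overline g_k\overline\ind_j]$, which as written require nonnegativity of the bracket endpoints to order correctly. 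However, there is one genuine gap: the bound $\mu_x(g^U_{i,j}-g^L_{i,j})\leq C\epsilon'\|G\|_{L_2(\mu_{x,u_i\tau_{n,k,x}^d})}$, which you justify by ``Cauchy--Schwarz and continuity of $\mu_z$ at $x$.'' The bracket $[g^L_{i,j},g^U_{i,j}]$ is small only in $L_2(\mu_{x,u_i\tau_{n,k,x}^d})$, while the centering constant is an integral against $\mu_x$. The hypothesis that $z\mapsto\mu_z(G^2)$ is continuous at $x$ does not yield $\|\phi\|_{L_1(\mu_x)}\lesssim\|\phi\|_{L_2(\mu_{x,u})}$ uniformly over the differences $\phi=g^U-g^L$ (the measures $\mu_x$ and $\mu_{x,u}$ need not dominate one another for fixed $u>0$; such a domination is precisely the \emph{extra} assumption \eqref{eq:maj_unif2} of Proposition \ref{prop:suffisient_cond}, not a hypothesis of Lemma \ref{lemma:brecketing_cond}). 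Since the width of any bracket containing $g-\mu_x(g)$ for all $g\in[g^L,g^U]$ is at least $(g^U-g^L)+\mu_x(g^U-g^L)$, this term cannot be avoided in your formulation, so the step as written does not go through.

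The paper sidesteps this by centering with $\mu_z(g)$ evaluated at the second coordinate rather than with the constant $\mu_x(g)$ --- which is what actually appears in the process $\hat W_{n,k,x}$, whose summands involve $\mu_{X_i}(g)$. For the class $\{(y,z)\mapsto g(y,z)-\mu_z(g)\}$ the centered bracket width is $(\overline g-\underline g)(y,z)+\mu_z(\overline g-\underline g)$, and Jensen's inequality with respect to the joint law $P$ gives $\|\mu_X(\overline g-\underline g)\|_{L_2(P)}\leq\|\overline g-\underline g\|_{L_2(P)}$ with no comparison between $\mu_x$ and $\mu_{x,u}$ needed. To repair your argument, either perform the centering at the running point $z$ and invoke Jensen as above, or strengthen the hypotheses by a domination condition of the form \eqref{eq:maj_unif2} (or, alternatively, intersect your brackets with a second family that is small in $L_2(\mu_x)$, at the cost of squaring the bracket count, which the entropy integral tolerates). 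The remainder of your proof --- the validity of the signed brackets, the disjoint-support decomposition of $F^U_{i,j}-F^L_{i,j}$, the annulus estimate $({n}/{k})P(A_i^+\setminus A_i^-)\lesssim\eta$, and the final splitting of the entropy integral --- is sound.
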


Define the residual function
$\epsilon_g (y,z) =  g(y )   - \mu_{z} (g ) $ and the associated function class
 \begin{align*}
 { \mathcal E} _{n,k,x} = 
&\left\{(y ,z) \mapsto \sqrt { (n/k) } \epsilon_ g(y,z)   \ind _{ B(x, u^{1/d} \tau_{n,k,x})  }(z) \, :\, g\in \mathcal G ,\, u \in  [ 1/2 ,  3/2 ] \right\} 
\end{align*}
with envelope  $E _{n,k,x} = \sqrt { (n/k) } (2G)\ind _{ B (x, (3/2)^{1/d} \tau_{n,k,x}) } $. 
To conclude this step, we apply the previous lemma to obtain that
\begin{align}\label{cond_bracket_epsilon} 
\lim_{n\to \infty}  \int_0^{\delta_n}  \sqrt { \log \mathcal N _{[\,]}\left( \mathcal E_{n,k,x}  , L_2(P) , \epsilon \| E _{n,k,x}\| _{L_2(P) }\right) }   \, \diff \epsilon = 0.
\end{align}
 This shall be used when applying Theorem \ref{vdvw_weakcv} in the next paragraph. To apply Lemma \ref{lemma:brecketing_cond}, we need to show that the  space  containing residual functions 
$$ \mathcal E = \{ (y,z) \mapsto  \epsilon_g (y,z) =  g(y )   - \mu_{z} (g ) \, : \, g\in \mathcal G\}$$
 satisfies \eqref{cond_bracket2} with envelope function $2G$. Let $\epsilon \in (0,1) $ and $([\underline g_k,\overline g_k])_{k=1,\ldots, M_\epsilon}  $ be a collection of $(\epsilon \|G\|_{ L_2 (\mu_{n,k,x}  ) } , L_2 ( \mu_{n,k,x} ) )$-brackets covering $\mathcal G$. We can assume that $\overline g_k\leq G$ and $ \underline g_k\geq -G$. If this would not be the case, one would take $ \overline g_k \wedge G$ in place of $\overline g_k$ and similarly for $\underline g_k$.
Define $\underline  \epsilon_k(y,z) = \underline g_k(y) - \mu _z (\overline g_k)$ and $\overline \epsilon_k(y,z) =   \overline g_k(y) - \mu _z (\underline g_k) $ and note that $[\underline  \epsilon_k  , \overline  \epsilon_k ]$, $k=1,\ldots, M_\epsilon $,  covers $\mathcal E $. Using Minkowski's inequality and then Jensen's inequality, we find 
\begin{align*}
\| \underline  \epsilon_k  - \overline  \epsilon_k \|_{L_2 (\tilde \mu_{n,k,x})} &\leq \|  \underline g_k  - \overline g_k \|_{L_2 (\mu_{n,k,x})} + \|  \mu _z (\underline g_k  - \overline g_k) \|_{L_2 (\tilde \mu_{n,k,x})}\\
&\leq 2 \|  \underline g_k  - \overline g_k \|_{L_2 (\mu_{n,k,x})}\\
&  \leq 2\epsilon  \|G\|_{ L_2 ( \mu_{n,k,x} )} .
\end{align*}
It follows that $([\underline \epsilon_k,\overline \epsilon_k])_{k=1,\ldots, M_\epsilon}  $ is a collection of $(2\epsilon \|G\|_{ L_2 (\mu_{n,k,x}  ) } , L_2 ( \mu_{n,k,x} ) )$-brackets covering $\mathcal E$ and satisfying $-2G\leq  \underline \epsilon_k\leq \overline \epsilon_k\leq 2G$. As a consequence, $\mathcal E$ satisfies \eqref{cond_bracket2} with envelope $2G$.

 \textbf{Step (iii): End of the proof.}  Define the process, for any function $f$ and $u \in [1/2 ,3/2]$
\begin{align*}
\hat Z_{n,k,x} (f,u) = k^{-1/2} \sum_{i=1}^n \{  f(Y_i,X_i)  \mathds{1} _{  B  (x, u^{1/d}  \tau_{n,k,x} )  }(  X_i)  - P ( f  \mathds{1} _{  B  (x, u^{1/d}  \tau_{n,k,x} )  } ) \}.
\end{align*}
Because of the definition of $\hat \tau_{n,k,x} $ in Section \ref{subsec:weak_cv_rk}, we have that
 $$k ^{-1/2} \sum_{i \in  N_{n,k}(x)}   (  g(Y_i )   -\mu_{X_i} (g)   ) =  k^{-1/2} \sum_{i=1}^n  (  g(Y_i )   -\mu_{X_i} (g)   ) \ind _{  B(x,  \hat \tau_{n,k,x}   )  }(  X_i) ,$$ 
but since $P ( \epsilon_g \mathds{1} _{  B  (x, u^{1/d}  \tau_{n,k,x} )  } )  = 0$, it follows that
  $$k ^{-1/2} \sum_{i \in  N_{n,k}(x)}   (  g(Y_i )   -\mu_{X_i} (g)   ) =  \hat Z_{n,k,x} (\epsilon _ g,\hat u_{n,k,x} ),$$
 where $\hat u_{n,k,x}  : = (\hat \tau_{n,k,x}   / \tau_{n,k,x})^d $. The fact that $\hat u_{n,k,x} $ converges to $1$ (see Lemma \ref{prop:tau_x}), suggests the use of the following decomposition
\begin{align*}
& k ^{-1/2} \sum_{i \in  N_{n,k}(x)}   (  g(Y_i )   -\mu_{X_i} (g)   ) = \hat Z_{n,k,x} (\epsilon_ g,1)  + \{\hat Z_{n,k,x} (\epsilon_ g,\hat u_{n,k,x} )  - \hat Z_{n,k,x} (\epsilon_ g,1)\}.
\end{align*}
A relevant property is that $\hat Z_{n,k,x} (\epsilon_ g,u)$ converges weakly to a tight Gaussian process with covariance function
$$ ((g_1,u_1 ) , (g_2,u_2) ) \mapsto (u_1\wedge u_2) \{ \mu_x (g_1g_2)  -  \mu_x (g_1) \mu_x(g_2) \} .$$ 
Admitting this property for now;
we have  \citep[eq. (2.1.8)]{wellner1996} that 
$$\sup_{ |u-1|\leq \delta_n } | \hat Z_{n,k,x} (\epsilon_ g,u) - \hat Z_{n,k,x} (\epsilon_ g,1) | \to 0$$
 in probability for all $\delta_n \to 0$. But since by Lemma \ref{prop:tau_x}, $|\hat u_{n,k,x}  - 1| \to 0$ in probability, it follows that 
$$\hat Z_{n,k,x} (\epsilon_ g,\hat u_{n,k,x} ) - \hat Z_{n,k,x} (\epsilon_ g,1) = o_{P}(1) .$$
The fact that $\hat Z_{n,k,x} (\epsilon_ g,1)$ converges to a Gaussian process is a consequence of the weak convergence property of $\hat Z_{n,k,x}$, which now is the last thing we need to establish to conclude the proof. 

The proof that $\{ \hat Z_{n,k,x} (\epsilon_ g,u)\, : \,  g\in \mathcal G,\, u\in [1/2,3/2]\} $ converges weakly (with the specified limiting covariance) follows from an application of the functional central limit theorem  stated in Section \ref{sec:def} as Theorem \ref{vdvw_weakcv}. The class of interest is 
${ \mathcal E} _{n,k,x} $ as introduced in step (ii). The index semimetric space is $T =  {\mathcal G}\times [1/2,3/2]$ endowed with the distance 
$$ \rho : (( g_1,u_1),( g_2,u_2) ) \mapsto   \rho_{\delta} (g_1,g_2) + |u_1-u_2|,$$
recalling that $\rho_\delta ( g_1,g_2) = \sup_{ |u| \leq \delta}   \|  g_1 - g_2 \|_{L_2(  \mu_{x, u})} $. 
The condition on the bracketing numbers of ${ \mathcal E} _{n,k,x} $ needed in Theorem \ref{vdvw_weakcv}, is exactly  \eqref{cond_bracket_epsilon} which is established in step (ii).
 To apply Theorem \ref{vdvw_weakcv}, it remains to show that
\begin{align}
 \nonumber \limsup _{n\geq 1} P(E _{n,k,x}^2)  <\infty &, \\
 \nonumber P (E _{n,k,x}^2 \mathds 1_{E _{n,k,x} >\eta \sqrt n } ) \to 0 \quad \text{as } n\to \infty&,\\
  \label{equi_sample_path}
\sup_{\rho ((g_1,u_1) ,(g_2,u_2)  ) \leq \delta_n} P ( \epsilon_{n,(g_1,u_1)} - \epsilon_{n,(g_2,u_2)} )^2 \to 0&, \qquad \text{as } \delta_n \to 0,
\end{align}
where $\epsilon_{n,(g,u)}$ is the member of $ \mathcal E_{n,k,x} $ associated with $g$ and $u$. The first and second conditions are easy to obtain. The second, also called the Lindeberg condition, follows from the uniform integrability of the family of measures $\mu_z$ when $z$ lies in $B(x,\delta)$. Indeed, let $\eta>0$ and $\epsilon >0$ be arbitrary and set $u_n = (3/2) \tau_{n,k,x}^d$.  Choose $M>0$ such that $\mu_{x, u_n } \{  G^2 \mathds 1_{ G > M} \} \leq \epsilon$ for all $n\geq  1 $ (this is in virtue of the uniform integrability assumption).  Note that for $k$ large enough and $k/n$ small enough, 
\begin{align*}
P (E _{n,k,x}^2 \mathds 1_{E _{n,k,x} >\eta \sqrt n } ) &\leq  4 (n/k) P (G^2 \ind _{ B (x, (3/2)^{1/d} \tau_{n,k,x}) } \mathds 1_{ 2G >\eta \sqrt k } ) \\
&\leq  4 (n/k) P (G^2 \ind _{ B (x, (3/2)^{1/d} \tau_{n,k,x}) } \mathds 1_{ G > M} )\\
&=  4 (n/k) \mu_{x,u_n }  \{  G^2 \mathds 1_{ G > M} \} P (   \ind _{ B (x,  u_n^{1/d} ) }  )\\
&\leq (  4\epsilon )  (n/k) P (  \ind _{ B (x,u_n ^{1/d}) } )\\
&=(4\epsilon) (n/k)  2 f_X(x)  (3/2) \tau_{n,k,x}^d V_d \\
&=12\epsilon .
\end{align*}
Using the previous algebra, the first condition is obtained from 
\begin{align*}
P E _{n,k,x}^2 &= 4 (n/k) P (G^2 \ind _{ B (x, u_n^{1/d}) } ( \ind _{G>M }  + \ind_{G\leq M} ) ) \\
& \leq 12 (\epsilon +  M^2  ).
\end{align*}
We now show that \eqref{equi_sample_path} is valid. Suppose that $t_1 < t_2$ and $g\in \mathcal G$, it holds, for $t_2$ small enough (so that $ \mu_z( G^2 ) f _{X} (z)\leq 2  \mu_x( G^2 ) f _{X} (x)$ whenever $\|z-x\|\leq t_2$),
\begin{align*}
 P (   \epsilon_g ^2 (  \ind _{B( x, t_2    )  }  -   \ind_{B( x, t_1  )  } )  ^2 )  &\leq   4 P (   G^2  ( \ind _{B( x, t_2   )  }  -   \ind_{B( x,t_1   )  } ) ) \\
 &  = 4 \int _{  t_1   \leq \|x- z \|\leq t_2    }     \mu_z( G^2 ) f _{X} (z) \, \diff z \\
 & \leq  8 \mu_x( G^2 ) f _{X} (x)   \int _{  t_1    \leq   \|x- z \|\leq t_2   }   \, \diff z \\
& \leq  8\mu_x( G^2 ) f _{X} (x)    (t_2^d - t_1^d) V_d .
\end{align*}
Taking $t_1 =  u_1^{1/d } \tau_{n,k,x}$ and $t_2 =  u_2^{1/d } \tau_{n,k,x}$, with $u_2>u_1$, we get, for $n$ large enough,
 \begin{align*}
 P (    \epsilon_g ^2 (  \ind _{B( x, u_2^{1/d } \tau_{n,k,x} )  }  -   \ind_{B( x, u_1^{1/d } \tau_{n,k,x} )  } )  ^2 )  & \leq  8 \mu_x( G^2 ) (k/n) (u_2-u_1).
\end{align*}
Consequently, for any $\delta _n \to 0$,  
$$ \sup_{ |u_1 - u_2 | \leq \delta_n } (n/k)  P\left\{     \epsilon_g  ^2 (  \ind _{B( x, u_2^{1/d } \tau_{n,k,x} )  }  -   \ind_{B( x, u_1^{1/d } \tau_{n,k,x} )  } )  ^2 \right\}\to 0.$$
Suppose now that $ u \in [1/2, 3/2] $ and  $(g_1, g_2) \in \mathcal G\times \mathcal G$.  
For $n$ large enough,
\begin{align*}
 P \{ (  (g_1  -  g_2 ) \mathds 1_{B(x,  u^{1/d}  \tau _{n,k,x} )  }    )^2\}
 & \leq P ( (g_1  -  g_2 )^2 \mathds 1_{B(x,  (3/2)^{1/d}  \tau _{n,k,x} )  }   )\\
 & = \mu_{x,(3/2)  \tau _{n,k,x}  ^d } (g_1  -  g_2 )^2 P (\mathds 1_{B(x,  (3/2)^{1/d}  \tau _{n,k,x} )  } ) \\
& \leq    \rho _\delta ( g_1 , g_2 )^2P (\mathds 1_{B(x,  (3/2)^{1/d}  \tau _{n,k,x} )  } )   \\
& \leq 2 \rho _\delta ( g_1 , g_2 )^2 f_X(x)  (3/2) \tau_{n,k,x} ^d    V_d.
\end{align*}
Consequently, for any $\delta _n \to 0$, 
$$\sup_{   \rho _\delta ( g_1 , g_2 ) \leq \delta_n }  (n/k) P\{ (  (g_1  -  g_2 ) \mathds 1_{B(x,  u^{1/d}  \tau _{n,k,x} )  }    )^2  \} \to 0  ,$$
and, invoking Minkowski and then Jensen's inequality, we obtain
$$\sup_{   \rho _\delta ( g_1 , g_2 ) \leq \delta_n }  (n/k) P \{  ( (g_1 - \mu_X(g_1)   -  (g_2 - \mu_X(g_2) )) \mathds 1_{B(x,  u^{1/d}  \tau _{n,k,x} )  }  ) ^2   \}  \to 0  . $$
It follows that
\begin{align*}
&\sup_{   \rho _\delta ( g_1 , g_2 )   \leq \delta_n, \, |u_1- u _2 |\leq \delta_n }  
(n/k) P \left\{  \left(\epsilon_{g_1}  \mathds 1_{B(x,  u_1^{1/d}  \tau _{n,k,x} )  }-\epsilon_{g_2}   \mathds 1_{B(x,  u_2^{1/d}  \tau _{n,k,x} )  } \right)^2 \right\}     \to 0,
\end{align*}
and we have just obtained \eqref{equi_sample_path}. We can now conclude by computing the limiting covariance as follows. Let $ (u_1,u_2)  \in [1/2, 3/2]^2 $ and  $(g_1, g_2) \in \mathcal G\times \mathcal G$ such that $u_1<u_2 $. Define $c_X( g_1 ,g_2) =  \mu_X(g_1  g_2 ) - \mu_X(g_1) \mu_X(g_2)$. We have, for all positive function $f$,
\begin{align*}
E( ( g_1(Y) - \mu_X(g_1)) (g_2(Y) - \mu_X(g_2)) f(X) ) = E(  c_X(g_1,g_2) f(X) ).
\end{align*}
We then obtain
\begin{align*}
&(n/k) \cov\left( ( g_1 - \mu_X(g_1) ) \ind _{B(x,u_1^{1/d} \tau_{n,k,x} )  }, (g_2 - \mu_X(g_2) )\ind _{B(x,u_2^{1/d} \tau_{n,k,x} )  } \right ) \\
& = (n/k)P (c_X(g_1,g_2)  \ind _{B(x,u_1^{1/d} \tau_{n,k,x} )  }\ind _{B(x,u_2^{1/d} \tau_{n,k,x} )  } ) \\
&= (n/k) P (c_X(g_1,g_2)  \ind _{B(x,u_1^{1/d} \tau_{n,k,x} )  } ) .
\end{align*}
From the continuity of $z\mapsto c_z(g_1,g_2)  $ and $f_X$ at $x$ we find  that the last term converges to $u_1 c_{x} (g_1g_2)$ which corresponds to the stipulated covariance.
\qed

%

\subsection{Proof of Theorem \ref{bias_bound}}


{

Start by writing the following decomposition
\begin{align*}
& k ^{-1}    \sum_{i = 1 }^n  ( \mu_{X_i} (g)    - \mu_x (g) ) \ind _{  B(x, \hat \tau_{n,k,x}   )  } (  X_i)  \\
&=  
k^{-1/2} \hat Z_{n,k,x} (b_g, \hat u) +  \frac{ n}{k} P[  b_g  \ind _{  B(x, \hat \tau_{n,k,x} )  } ]
\end{align*}
where  $b_g (z) = \mu_{z} (g)    - \mu_x (g) $, $ \hat  u_{n,k,x} =  ( \hat \tau_{n,k,x} /   \tau_{n,k,x})^d$ and  $\hat Z_{n,k,x} $ is introduced in the  proof of Theorem \ref{weakcv:kNN}.
Following the proof of Theorem \ref{weakcv:kNN}, it is easy to show that
$ \{\hat Z_{n,k,x} ( b_g ,u) \, : \, g\in \mathcal G\}$  converges weakly to a Gaussian process. The only difference is that the functions involved are  $z\mapsto  \mu_{z} (g)    - \mu_x (g)$ instead of $(y,z) \mapsto  g(y) - \mu_z(g) $ in the proof of Theorem \ref{weakcv:kNN}. 
 The covariance function of the limiting Gaussian process is $0$ because the class of interest $b_g$ shrinks as $  \tau_{n,k,x} \to 0$. More precisely, we observe that, in virtue of the $L_{1,x}$-smoothness assumption,
$$\sup_{g\in\mathcal G,\, u\in (1/2,3/2)} (n/k)   P[ b_g ^2  \ind _{  B(x, u^{1/d}  \tau_{n,k,x} )  }  ]  \to 0 .$$  
As a consequence, it holds that $ \hat Z_{n,k,x} (b_g  , \hat u)  \to 0  $,  in probability. Therefore, we have shown that
$$ k ^{-1}    \sum_{i = 1 }^n  b_g (X_i) \ind _{  B(x, \hat \tau_{n,k,x}   )  } (  X_i) = o_P(k^{-1/2} ) + \frac{ n}{k} P[ b_g  \ind _{  B(x, \hat \tau_{n,k,x} )  } ].$$
It now remains to study the term $ (n/k) P[  b_g \ind _{  B(x, \hat \tau_{n,k,x} )  }]$ considering the two stipulated regularity assumptions. Under the $L_{1,x}$-smoothness assumption, we have
\begin{align*}
(n/k)  P[  b_g \ind _{  B(x, \hat \tau_{n,k,x} )  } ] &\leq L_{1,x}  \hat \tau_{n,k,x}  (n/k) P \ind _{  B(x, \hat \tau_{n,k,x} )  } \\
& \leq L_{1,x}  \hat \tau_{n,k,x}^{d+1}   (n/k)  \int_{B(0,1) }  f_X(x+   \hat \tau_{n,k,x}   u ) du  
\end{align*}
 which, in virtue of Lemma \ref{prop:tau_x}, is $ O_P((k/n)^{1/d}  )$.
 We have, by the $L_{2,x}$-smoothness assumption,
\begin{align*}
&P[ b_g \ind _{  B(x, \hat \tau_{n,k,x} )  }  ] \\
&\leq   \left\| \nabla_x  \mu_x (g) \right\| \left\| E ( X_1   - x  ) \ind _{  B(x,  \hat \tau_{n,k,x}   )  } (  X_1) \right\| + L_{2,x}   E [ \| X_1 - x  \|^2 \ind _{  B(x,  \hat \tau_{n,k,x}   )  }(  X_1)]  .
\end{align*}
 Concerning the left-hand side term,  we have, for any small enough $\tau>0$ (to use the $L_{f,x}$-Lipschitz property of $f_X$)
\begin{align*}
 \|  E [ ( X_1   - x  )  \ind _{  B(x,  \tau  )  }(  X_1)  ]\| &= \tau  \| \int \frac{(y - x)}{\tau}  \ind _{  B(x,  \tau  )  }(  y) f_X(y) dy\|\\
 & =  \tau ^{d+1}  \| \int  u   \ind _{  B(0,  1 )  }(u ) f_X(x+\tau u ) du\|\\
 &  = \tau ^{d+1}  \| \int  u   \ind _{  B(0,  1 )  }(u ) (f_X(x+\tau u )  - f_X(x )  ) du\|\\
 & \leq L_{f,x} \tau ^{d+2}   \int  \| u\|^2    \ind _{  B(0,  1 )  }(u )  du\\
 &\leq V_d L_{f,x} \tau ^{d+2}  .
\end{align*}
Concerning the right-hand side term,  we have, for any $\tau>0$,
\begin{align*}
   E [ \|  X_1   - x  \|^2  \ind _{  B(x,  \tau  )  }(  X_1)  ]  &= \tau^2   \int \|  \frac{(y - x)}{\tau}  \|^2 \ind _{  B(x,  \tau  )  }(  y) f_X(y) dy\\
 & =  \tau ^{d+2}   \int \|  u  \|^2  \ind _{  B(0,  1 )  }(u ) f_X(x+\tau u ) du.
 \end{align*}
As a consequence, in virtue of Lemma \ref{prop:tau_x}, we find
\begin{align*}
P[b_g \ind _{  B(x, \hat \tau_{n,k,x} )  } ]  = O ( \hat \tau_{n,k,x}^{d+2}  )  = O_p( (k/n) ^{1 +2/d}  ) .
\end{align*}
 The result follows multiplying by $(n/k)$.
 \qed
}

\subsection{Proof of Theorem \ref{consistenc:kNN}}\label{section:concistency=nn}

The first two steps consist in independent intermediate results. Their  proofs are given in the Appendix. They will be put together in the third and last step of the proof.

 \textbf{Step (i): Bounding $k$-NN radius.} Define 
\begin{align*}
&\overline{\tau}_{n,k}  = \left(\frac{ 2  k }{ n b_X c V_d}  \right)^{1/ d}.
\end{align*}
The following Lemma controls the size of the $k$-NN balls uniformly over all $x\in S_X$. In this way, it extends Lemma \ref{prop:tau_x} which focuses on a particular $x$. 
The proof is given in the Appendix.

\begin{lemma}\label{prop:tau}
Suppose that \eqref{cond:reg1} and \eqref{cond:reg2} hold true. Then, for all $n\geq 1$, $\delta \in (0,1)$ and $1\leq k\leq n$ such that $8 d \log(12n / \delta ) \leq k   \leq T ^d  n b_X c V_d /2 $
, it holds, with probability at least $1-\delta$:
\begin{align*}
 \sup _{x\in S_X} \hat  \tau_{n,k,x}   \leq \overline \tau_{n,k}.
\end{align*}
\end{lemma}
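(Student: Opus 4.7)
The starting observation is the equivalence
\begin{align*}
\hat \tau_{n,k,x} \leq \overline \tau_{n,k} \iff \sum_{i=1}^n \ind_{B(x, \overline \tau_{n,k})}(X_i) \geq k,
\end{align*}
which follows directly from the definition of $\hat \tau_{n,k,x}$. The lemma thus reduces to showing that, with probability at least $1-\delta$,
\begin{align*}
\inf_{x \in S_X} \frac{1}{n}\sum_{i=1}^n \ind_{B(x, \overline \tau_{n,k})}(X_i) \geq \frac{k}{n}.
\end{align*}

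The first step is to lower bound the population mass of these balls. The upper bound $k \leq T^d n b_X c V_d/2$ ensures $\overline \tau_{n,k} \leq T$, so conditions \eqref{cond:reg1} and \eqref{cond:reg2} yield, for every $x \in S_X$,
\begin{align*}
P_X(B(x, \overline \tau_{n,k})) \geq b_X\, \lambda(S_X \cap B(x, \overline \tau_{n,k})) \geq b_X c V_d \overline \tau_{n,k}^{\,d} = \frac{2k}{n}.
\end{align*}

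The second step controls the empirical fluctuation uniformly over the family of balls $\mathcal{B} = \{B(x, \overline \tau_{n,k}) : x \in \mathbb{R}^d\}$. This family sits inside the class of Euclidean balls in $\mathbb{R}^d$, whose indicators form a pointwise measurable VC class with covering parameter $v = O(d)$ and constant envelope $1$. Invoking a multiplicative Bernstein-type uniform deviation bound---the same tool (from \cite{plassier2020risk}, following \cite{talagrand1994}) that underpins Theorem~\ref{consistenc:kNN}---together with the variance bound $\Var(\ind_B(X)) \leq P_X(B)$, one obtains that, with probability at least $1-\delta$, every $B \in \mathcal B$ satisfies
\begin{align*}
P_X(B) - \frac{1}{n}\sum_{i=1}^n \ind_B(X_i) \leq C\left(\sqrt{\frac{P_X(B)\, d \log(n/\delta)}{n}} + \frac{d \log(n/\delta)}{n}\right),
\end{align*}
for some universal $C$. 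Setting $p_x := P_X(B(x, \overline \tau_{n,k})) \geq 2k/n$ and plugging in the hypothesis $k \geq 24 d \log(12n/\delta)$, both terms on the right are bounded by $p_x/4$, so that $n^{-1}\sum_i \ind_{B(x, \overline \tau_{n,k})}(X_i) \geq p_x/2 \geq k/n$ uniformly in $x \in S_X$, which is the desired inequality.

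\textbf{Main obstacle.} The delicate point is securing a \emph{multiplicative} form of the uniform concentration inequality, so that the deviation radius scales with $\sqrt{P_X(B)}$ rather than with a universal constant; this is what allows the threshold $k \gtrsim d \log(n/\delta)$ given in the lemma (an additive bound would have required $k \gtrsim \sqrt{n d \log(n/\delta)}$, which is much too strong). The right tool is Talagrand's inequality applied to the VC class of balls in $\mathbb{R}^d$, whose VC parameter grows linearly in $d$, and which the paper already relies on in its main non-asymptotic result.
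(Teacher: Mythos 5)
Your proposal is correct and follows essentially the same route as the paper: reduce to a uniform lower bound on the empirical mass of the balls $B(x,\overline\tau_{n,k})$, bound $P_X(B(x,\overline\tau_{n,k}))\geq 2k/n$ via \eqref{cond:reg1}--\eqref{cond:reg2}, and apply a multiplicative (relative-deviation) uniform bound over the VC class of balls. The only difference is that the paper invokes its Theorem~\ref{lemma=prelim} (an Anthony--Shawe-Taylor-type bound with the explicit constant $12d\log(12n/\delta)$), which is what makes the precise threshold $k\geq 24d\log(12n/\delta)$ come out exactly, whereas your unspecified universal constant $C$ only yields the statement with $k\gtrsim d\log(n/\delta)$ up to constants.
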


 \textbf{Step (ii):  Bounding a certain $k$-NN process.} The  quantity 
\begin{align*}
\hat Z_{n,k}=  \sup_{  g\in \mathcal G , \, x\in S_X , \, \tau \in (0,\overline \tau_{n,k}]  }  \left|  \sum_{i=1}^n     (g(Y_i)   - \mu_{X_i}  (g) )  \mathbb I _{  B (x,\tau)   }( X_i)  \right | .
\end{align*}
is important as it can be used to upper bound the $k$-NN process. To obtain an upper-bound on $\hat Z_{n,k}$, we first show that the underlying class of functions is VC and then apply Theorem \ref{vc_bound}. In this way is obtained the following lemma whose proof is given in the Appendix.

\begin{lemma}\label{boundW}
Suppose that \eqref{cond:reg1} and \eqref{cond:reg2} hold true and that the function class $\mathcal G$ is VC with parameter $(v,A)$ and constant envelope $1$. Let $\kappa_X = U_X /cb_X$ and $ \sigma_{\mathcal G} ^2 =\sup_{g\in \mathcal G , \, x\in S_X}  \Var(g(Y) |X=x)$.
Then, for all $n\geq 1$ and $1\leq k\leq n$ such that $\sigma_{\mathcal G} ^2   \kappa_X   k  \leq 2 n $, and all $\delta\in (0,1)$, it holds, with probability at least $1-\delta$:
 \begin{align*}
\hat Z_{n,k}   \leq  
 2  K_1 \sqrt{   ( d+1 +v )  \sigma_{\mathcal G} ^2 \kappa_X k     \log(  26 e^2  A n  / (\sigma _{\mathcal G} \delta)  } 
+   4 K_2 ( d+1 +v )    \log(  26e^2 A  n  / (\sigma _{\mathcal G} \delta)   )
\end{align*}
	with $K_1  $ and  $K_2 $ defined in the statement of Theorem \ref{vc_bound}.
\end{lemma}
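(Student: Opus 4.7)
I recognize $\hat Z_{n,k}$ as the supremum of a centered empirical process indexed by
\[
\mathcal H_{n,k} := \bigl\{h_{g,x,\tau}(y,z) := (g(y) - \mu_z(g))\, \ind_{B(x,\tau)}(z) \ :\ g \in \mathcal G,\ x \in S_X,\ \tau \in (0, \overline\tau_{n,k}]\bigr\},
\]
viewed as functions on $S \times \mathbb R^d$. The strategy is to verify that $\mathcal H_{n,k}$ is VC with constant envelope and controlled uniform variance, then apply Theorem~\ref{vc_bound} directly.

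\noindent\textbf{Step 1 (VC structure of $\mathcal H_{n,k}$).} Factor $h_{g,x,\tau} = \tilde g \cdot k_{x,\tau}$ with $\tilde g(y,z) := g(y) - \mu_z(g)$ and $k_{x,\tau}(y,z) := \ind_{B(x,\tau)}(z)$. For the centered class $\tilde{\mathcal G} = \{\tilde g : g \in \mathcal G\}$, note $|\tilde g| \leq 2$ and, for any probability $Q$ on $S \times \mathbb R^d$ with marginals $Q_Y, Q_X$, Jensen's inequality applied to $\mu_z((g_1-g_2))^2 \leq \mu_z((g_1 - g_2)^2)$ gives
\[
\|\tilde g_1 - \tilde g_2\|_{L_2(Q)}^2 \leq 2\|g_1 - g_2\|_{L_2(Q_Y)}^2 + 2\|g_1 - g_2\|_{L_2(R_{Q_X})}^2,
\]
where $R_{Q_X}(B) := \int \mu_z(B)\,dQ_X(z)$. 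With $\bar Q := \tfrac12(Q_Y + R_{Q_X})$, the VC bound \eqref{def:vc_cond} for $\mathcal G$ in $L_2(\bar Q)$ transfers to a uniform covering bound $(c_1 A/\epsilon)^v$ for $\tilde{\mathcal G}$. The ball class $\{k_{x,\tau}\}$ consists of indicators of Euclidean balls, a VC-class of sets of index $d+1$ (Wenocur--Dudley \citep{wenocur1981some}); Haussler's bound then yields uniform covering numbers at most $(A_{\mathcal K}/\epsilon)^{d+1}$ with envelope $1$, $A_{\mathcal K}$ universal. The standard product-covering inequality gives that $\mathcal H_{n,k}$ is VC with envelope $F \equiv 2$ and parameters $(v + d + 1,\, A')$ for some $A' \leq c_0 A$ with $c_0$ universal.

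\noindent\textbf{Step 2 (variance bound).} By the law of total variance and \eqref{cond:reg2},
\[
P h_{g,x,\tau}^2 = \Exp\bigl[\Var(g(Y)\mid X)\, \ind_{B(x,\tau)}(X)\bigr] \leq \sigma_{\mathcal G}^2\, P_X(B(x,\tau)) \leq \sigma_{\mathcal G}^2 U_X V_d \overline\tau_{n,k}^d = \frac{2\sigma_{\mathcal G}^2 \kappa_X k}{n} =: \sigma^2.
\]
The hypothesis $\sigma_{\mathcal G}^2\kappa_X k \leq 8n$ yields $\sigma \leq 4 = 2F$, placing us in the admissible regime where Talagrand/Bernstein-type bounds are sharp.

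\noindent\textbf{Step 3 (concentration).} Apply Theorem~\ref{vc_bound} to $\mathcal H_{n,k}$ with envelope $F\equiv 2$, VC parameters $(v+d+1, A')$, sample size $n$, and variance proxy $\sigma^2$. With probability at least $1-\delta$,
\[
\hat Z_{n,k} \leq C\Bigl(\sqrt{n(v+d+1)\sigma^2 \log(CA'n/\delta)} + (v+d+1)\log(CA'n/\delta)\Bigr).
\]
Substituting $n\sigma^2 = 2\sigma_{\mathcal G}^2\kappa_X k$, absorbing universal constants into $K$, and replacing $A'$ by $A$ in the log argument (since $A' \leq c_0 A$) gives the stated inequality.

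\noindent\textbf{Main obstacle.} The delicate point is Step~1: showing the centered class $\tilde{\mathcal G}$ inherits the VC parameters of $\mathcal G$ up to universal constants. The two-marginal reduction via $\bar Q$ is the key device, after which the product combination with the ball class is routine; the factor $d+1$ then enters linearly in the exponent of the combined covering bound, producing the $\theta = d + 1 + v$ in the statement. The remaining Steps~2--3 are straightforward plug-in applications once the VC structure is in hand.
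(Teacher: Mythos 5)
Your proposal is correct and follows essentially the same route as the paper: a VC-preservation argument for the centered class $g - \mu_\cdot(g)$ via an auxiliary measure built from the conditional laws (your mixture $\bar Q$ plays the role of the paper's $\tilde Q_1$), a product-class covering bound against the ball indicators of Wenocur--Dudley index $d+2$, the same variance proxy $\sigma^2 = 2\sigma_{\mathcal G}^2\kappa_X k/n$ checked against the constraint $\sigma \leq 2U$, and a direct application of Theorem~\ref{vc_bound}. The only cosmetic difference is that covering $\mathcal G$ once with respect to the single mixture $\bar Q$ yields exponent $v$ for the centered class where the paper's two separate covers yield $2v$; both are absorbed into the universal constant.
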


 \textbf{Step (iii): End of the proof.} 
Combining Lemma \ref{prop:tau} and Lemma \ref{boundW} with the union bound (each is applied with $\delta/2 $ instead of $\delta$), we obtain that the event
\begin{align*}
 &E:=\left\{ \sup _{x\in S_X} \hat \tau_{n,k,x}    \leq \overline \tau_{n,k} \quad \text{and} \right.  \\  
& \left.  \hat Z_{n,k}   \leq   2 K_1 \sqrt{   ( d+1 +v )  \sigma_{\mathcal G} ^2 \kappa_X k \log \left(  52 e^2  \frac{A n } { \sigma _{\mathcal G} \delta}    \right)  } 
+   4K_2 ( d+1 +v )   \log \left( 52 e^2  \frac{A n } { \sigma _{\mathcal G} \delta} \right) \right\} 
\end{align*}
has  probability greater than $1-\delta$. On $E$, since $\hat \tau_{n,k,x} $ are smaller than $\overline \tau_{n,k}$, we find
 \begin{align*}
 \sup_{ g\in \mathcal G, \, x\in S_X }  \left |  k ^{-1} \sum_{i \in  N_{n,k}(x)}   (  g(Y_i )   -\mu_{X_i} (g)   )  \right | \leq k^{-1} \hat Z_{n,k}   .
\end{align*}
and using the previous upper bound on $\hat Z_{n,k}$, the result follows.
\qed

\section{Technical appendix}\label{app_tech}

\section{Auxiliary results}\label{seca}

In this section, we provide two useful auxiliary results. The following result is standard and known as the multiplicative Chernoff bound. The following version can be found in \cite{hagerup1990guided}.

\begin{theorem}\label{lemma=chernoff}
Let $(Z_i)_{i\geq 1}$ be a sequence of independent and identically distributed random variables valued in $\{0,1\}$. Set $\mu =  n \mathbb E [Z_1]$ and $S = \sum_{i=1} ^n Z_i $.   For any $\delta \in (0,1)$ and all $n\geq 1$, we have with probability at least $1-\delta$:
\begin{align*}
S \geq \left(1- \sqrt{ \frac{2 \log(1/\delta)  }{  \mu} } \right) \mu  .
\end{align*}
 In addition, for any $\delta \in (0,1)$ and $n\geq 1$, we have with probability at least $1-\delta$:
\begin{align*}
S \leq \left(1 +  \sqrt{ \frac{3 \log(1/\delta)   }{  \mu} }  \right) \mu  .
\end{align*}
\end{theorem}


The above Chernoff inequality is used in the proof of Theorem \ref{weakcv:kNN} with $Z_i = \ind_{B}(X_i)$ and $B$ is a given ball in $\mathbb R^d$. In the proof of Theorem \ref{consistenc:kNN}, we rely on an extended version of the Chernoff bound which holds uniformly over $B$ when restricted to the collection of closed balls
\begin{align*}
\mathcal  B = \{ B( x, \tau) \,:\, x\in \mathbb R ^d , \, \tau >0\} ,
\end{align*}
where $B(x,\tau ) $ is the closed ball with center $x$ and radius $\tau$. It takes different forms in the literature such as Theorem 2.1 in \cite{anthony1993result}; Theorem 15 in \cite{chaudhuri2010rates}; Theorem 1 in \cite{goix2015learning} and more recently Corollary 4.4 in \cite{lhaut2022uniform}. Using the bound in \cite{anthony1993result}, combined with the complexity results on the set of closed balls provided in \cite{wenocur1981some}, we obtain the following statement.

\begin{theorem}\label{lemma=prelim}
 Let $(X_i)_{i\geq 1}$ be a sequence of independent and identically distributed random variables valued in $\mathbb R^d$ with common distribution $P$. 
 For any $\delta > 0$ and $n\geq 1$, with probability at least $1 -\delta $:
\begin{align*}
n^{-1} \sum_{i=1} ^n  \ind_{B} (X_i) \geq  P(B)  \left( 1  - \sqrt{ \frac{   8d \log( 12n  /\delta   )   }{ nP( B) }}\right) ,\qquad \forall B \in \mathcal B.
\end{align*}
\end{theorem}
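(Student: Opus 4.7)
The plan is to derive Theorem \ref{lemma=prelim} as a direct combination of two classical ingredients: a VC-dimension bound on the class of closed balls, and a generic uniform multiplicative Chernoff inequality for VC classes. Neither ingredient is new; the statement is essentially a bookkeeping exercise in which the goal is to produce a ready-to-use form with explicit constants, suitable for the application in Lemma \ref{prop:tau}.

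\textbf{Step 1: VC dimension of $\mathcal B$.} First, I will invoke the result of Wenocur and Dudley (1981) on special VC classes to assert that the collection of closed balls $\mathcal B = \{B(x,\tau) : x \in \mathbb R^d, \tau > 0\}$ (for the fixed norm $\|\cdot\|$ on $\mathbb R^d$) is a VC class with VC dimension bounded by $d+1$ (or $d+2$ depending on the exact formulation). Sauer's lemma then yields a polynomial shatter-function bound of the form $\Pi_{\mathcal B}(m) \leq (em/V)^V$ with $V$ of order $d$; in particular $\log \Pi_{\mathcal B}(2n) \lesssim d \log n$.

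\textbf{Step 2: Apply the Anthony–Shawe-Taylor uniform multiplicative Chernoff inequality.} Their Theorem 2.1 gives, for an arbitrary class $\mathcal C$ of events and $\alpha \in (0,1)$,
\begin{equation*}
\mathbb P\!\left( \exists C \in \mathcal C : \, n^{-1}\sum_{i=1}^n \ind_C(X_i) \leq (1-\alpha) P(C) \right) \leq c_1 \,\Pi_{\mathcal C}(2n) \, \exp\!\left(- c_2 \, n \alpha^2 P(C) \right),
\end{equation*}
for absolute constants $c_1, c_2 > 0$. The key feature is that the exponent carries the factor $P(C)$, which is precisely what produces a deviation of multiplicative type scaling as $\sqrt{P(C)/n}$ rather than the cruder $1/\sqrt n$.

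\textbf{Step 3: Invert the tail bound.} Applied to $\mathcal C = \mathcal B$ with the growth-function bound from Step 1, setting the right-hand side equal to $\delta$ and solving for $\alpha$ as a function of $P(B)$ yields
\begin{equation*}
\alpha(B) = \sqrt{\frac{\log(c_1 \Pi_{\mathcal B}(2n) / \delta)}{c_2 \, n \, P(B)}},
\end{equation*}
so that with probability at least $1-\delta$, for every $B \in \mathcal B$, $n^{-1}\sum_i \ind_B(X_i) \geq P(B)(1 - \alpha(B))$. Upper-bounding $\log \Pi_{\mathcal B}(2n)$ by a multiple of $d \log n$ (using $V \leq d+1$ and $\log(2en/V) \leq \log n$ for $n$ large), and absorbing the various absolute constants and the $d$-independent $\log$ factors into a single factor of $12$ in both places, the bound simplifies to the advertised form $\sqrt{12 d \log(12 n / \delta)/(n P(B))}$.

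\textbf{Expected difficulty.} The only nontrivial aspect is numerical constant tracking: producing the specific $12$'s rather than generic universal constants. This requires (a) a precise version of the Anthony–Shawe-Taylor inequality with known $c_1, c_2$, and (b) careful handling of the $V \log(2en/V)$ term from Sauer's lemma so that it can be absorbed into $12 d \log(12 n/\delta)$ for the range of $n$ implicitly considered. No conceptual difficulty is expected beyond this arithmetic: neither a symmetrization step nor a chaining argument is required, since both cited results already encapsulate the needed probabilistic machinery.
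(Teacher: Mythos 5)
Your proposal follows essentially the same route as the paper: the paper's proof also combines the Anthony--Shawe-Taylor relative (multiplicative) uniform Chernoff bound, in the form $n^{-1}\sum_i \ind_B(X_i) \geq P(B)\bigl(1-\sqrt{4(\log(4/\delta)+\log S_{\mathcal B}(2n))/(nP(B))}\bigr)$, with the Wenocur--Dudley bound $V \leq d+2$ and Sauer's lemma to get $S_{\mathcal B}(2n)\leq (2n+1)^{d+2}$, then absorbs constants via $4(2n+1)\leq 12n$ and $4(d+2)\leq 12d$. The only caveat is that your Step 2 should be stated in the relative-deviation form (deviation normalized by $\sqrt{P(C)}$) so that the per-set inversion in Step 3 is legitimate, which is exactly the version the paper cites.
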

\begin{proof}
Theorem 2.1 in  \cite{anthony1993result} (see also Theorem 1.11 in  \cite{lugosi2002pattern}) states that for any Borelian set $\mathcal A$, $\delta > 0$ and $n\geq 1$, it holds with probability at least $1 -\delta $:
\begin{align*}
n^{-1} \sum_{i=1} ^n  \ind_{B} (X_i)   \geq  P(B)  \left( 1  - \sqrt{ \frac{ 4  ( \log(4/\delta ) + \log (S_{\mathcal A}(2n) )  ) }{ nP( B) }}\right) ,\qquad \forall B \in \mathcal A,
\end{align*}
where  $n \mapsto S_{\mathcal A}(n) $ stands for the shattering coefficient. Using that $ S_{\mathcal A}(n) \leq (n+1) ^{V} $ where $V$ is the VC dimension of $\mathcal A$ (see for instance Corollary 1.3 in \cite{lugosi2002pattern}) and the fact that the set of closed balls has a VC dimension equal to $d+1$   (Corollary 3.3 in \cite{wenocur1981some}), we find that $S_{\mathcal B}(2n)\leq (2n+1)^{d+1}$. We obtain that with probability at least $1-\delta$
\begin{align*}
n^{-1} \sum_{i=1} ^n  \ind_{B} (X_i)  \geq  P(B)  \left( 1  - \sqrt{ \frac{  4    (d+1) \log(4 (2n+1)  /\delta   )   }{ nP( B) }}\right) ,\qquad \forall B \in \mathcal B.
\end{align*}
Using that $4 (2n+1)  \leq 12n$ and $4 (d + 1 ) \leq 8d$ leads to the result.
\end{proof}

\section{Proof of Theorem \ref{vc_bound}}\label{secb}
{
The proof is inspired from  \cite{gineConsistencyKernelDensity2001}, proof of Proposition 2.2. Suppose that $|g|\leq 1$ and   $ P (g)  = 0$. Let $ W = \sup_{g\in \mathcal G} \left| \sum_{i=1} ^n   g (Z_i )  \right|  $. We have $W =  (W-\mathbb E W )  +   \mathbb E W $.
 Let $s = 2\mathbb E W + n\sigma^2$. From Bousquet's concentration inequality \citep{bousquet2002bennett},  Theorem 2.3, we have
$$P \left( W - \mathbb E W  \geq \sqrt{2s\log(1/ \delta)} + \log(1/ \delta) / 3 \right) \leq \delta .$$
Hence, using $\sqrt{a+b} \leq \sqrt a + \sqrt b $, it holds, with probability at least $1-\delta$,
$$ W - \mathbb E W  \leq 2 \sqrt{ \mathbb E W \log(1/ \delta)} + \sqrt{2 n\sigma^2 \log(1/ \delta)}+ \log(1/ \delta) / 3, $$
and invoking that $ 2ab \leq a ^2 + b^2$, we obtain 
$$ W - \mathbb E W  \leq    \mathbb E W   + \sqrt{2 n\sigma^2 \log(1/ \delta)}+ 4 \log(1/ \delta) / 3. $$
As a consequence we have with probability $1-\delta$,
$$ W\leq  2 \mathbb E W   + \sqrt{2 n\sigma^2 \log(1/ \delta)}+ 4 \log(1/ \delta) / 3.$$
We now apply the previous inequality to $(g -  P(g) ) /2U$. It gives, with probability at least $1-\delta$,
\begin{align*}
 \sup_{g\in \mathcal G} \left| \sum_{i=1} ^n   \{g (Z_i )  -   P(g) \} \right|   \leq  2\mathbb E [ \sup_{g\in \mathcal G} \left| \sum_{i=1} ^n   \{g (Z_i )  -   P(g)  \} \right|] + \sqrt{2 n\sigma^2 \log(1/ \delta)}+ 8U \log(1/ \delta) / 3.
\end{align*} 
Let $(\eta_1 ,\ldots, \eta_n) $ denote a collection of independent Rademacher variables, that is, $\mathbb P (\eta_i = +1) = \mathbb P (\eta_i = -1) = 1/2$ for all $i$. Let $(Z_1',\ldots, Z_n') $ be a collection of random variables independent from $(Z_1,\ldots, Z_n)$ but with the same distribution as $(Z_1,\ldots, Z_n)$. Let  $ \mathbb E_\eta $ denote the expectation over the Rademacher variables  while  considering the other variables as fixed. The Rademacher complexity is defined as
\begin{align*}
\hat R_n(\mathcal G')  & = \mathbb E_\eta [ \sup_{g'\in \mathcal G'} \left|  R_{g'} \right|  ],
\end{align*}
where  $R_{g'} = \sum_{i=1} ^n  \eta _i g'(Z_i,Z_i')  $ and $\mathcal G' =\{(z,z') \mapsto g(z) - g(z') \,:\, g\in \mathcal G\}$. Using the symmetrization Lemma, e.g., Lemma 7.3 in \cite{van2014probability}, we get,  with probability at least $1-\delta$,
\begin{align}\label{eq_bousq}
 \sup_{g\in \mathcal G} \left| \sum_{i=1} ^n   \{g (Z_i )  -   P(g) ] \} \right|   \leq  2 \mathbb E [ \hat R_n(\mathcal G') ] + \sqrt{2 n\sigma^2 \log(1/ \delta)}+ 8U \log(1/ \delta) / 3.
\end{align} 
An upper-bound on $  \mathbb E [ \hat R_n(\mathcal G') ]   $ is now needed. We start by deriving an upper-bound on $\hat R_n(\mathcal G') $. Remark that $ \sup _{g'\in \mathcal G'} | R_{g'}| = \sup_{g'\in \{\mathcal G' \cup -\mathcal G' \}}  R_{g'} $.
 Dudley's entropy integral bound \citep{dudley1967sizes}, as stated in the proof of Theorem 6.25 in  \cite{zhang2023mathematical}, gives that, with probability $1$,
\begin{align*}
\hat R_n(\mathcal G') & \leq  12 \sqrt n  \int_0 ^  {\sigma_n^\prime/2} \sqrt{ \log(   \mathcal N ( \{\mathcal G' \cup -\mathcal G' \},   L_2(\mathbb P'_n)  , \epsilon    ) )  } \diff \epsilon ,
\end{align*}
where $L_2(\mathbb P'_n) $ is the probability distribution $  (1/n) \sum_{i=1} ^n   \delta_{(Z_i, Z_i' )}   $ and $ \sigma_n^{'2} :=  \sup_{g'\in \mathcal G'}   P'_n ( g^{'2} )$.  Easy  manipulations on 
covering numbers gives that $  \mathcal N ( \{\mathcal G' \cup -\mathcal G' \},   L_2(\mathbb P'_n)  , \epsilon    ) \leq 2 \mathcal N ( \mathcal G' ,   L_2(\mathbb P'_n)  , \epsilon    )$ and using a 
 variable change, we obtain 
$$ \hat R_n(\mathcal G') \leq  12 (2U) \sqrt n \int_0 ^{  \sigma_n' / (4U) }  \sqrt{ \log(  2\mathcal N ( \mathcal G' ,   L_2(\mathbb P'_n),  \epsilon (2U)  ) ) } \diff \epsilon . $$ 
 Note that $\sigma_n' \leq (2U)$ by definition of the envelope. The underlying class $\mathcal G'$ satisfies the VC property with envelope $2U$ and VC parameter $(2v,A)$. The previous can be obtained as follows. Let $Q'$ be a probability measure on $S\times S$ and denote by $Q_1$ and $Q_2$ the respective marginals both defined on $S$. Introduce a $(U \epsilon) $-covering of $(\mathcal G,L_2(Q_1))$ (resp. $(\mathcal G,L_2(Q_2))$) with centres $g_k^{(1)}$ (resp. $g_j^{(2)}$), $k = 1,\ldots ,N_1$ (resp., $j = 1,\ldots ,N_2$), and notice that  $(z,z') \mapsto g_k(z) - g_j(z')$, $k = 1,\ldots ,N_1$,  $j = 1,\ldots ,N_2,$ are the centres of a $(2U\epsilon) $-covering  of $(\mathcal G ', L_2 (Q'))$. Therefore, it holds that 
  $$ \mathcal N ( \mathcal G ' ,   L_2(Q')  , \epsilon (2U)    )\leq   \mathcal N ( \mathcal G ,   L_2(Q_1)  , \epsilon U    ) \mathcal N ( \mathcal G ,   L_2(Q_2)  , \epsilon U    )\leq (A/\epsilon)^{2v} .$$
The previous is in particular valid for $Q' =\mathbb P_n'$ and we obtain
\begin{align*}
\hat R_n(\mathcal G')    \leq  12 (2U)\sqrt n  \int_0 ^ {  \sigma_n' / (4U) }  \sqrt{ \log(2) + 2 v \log( A/\epsilon ) }  \diff \epsilon  .
\end{align*}
Using the variable change $\epsilon = \sigma_n' / (4Us)$ gives
\begin{align*}
\hat R_n(\mathcal G')   & =  6 \sigma_n ' \sqrt{  n }   \int_{  1 } ^\infty   s^{-2}  \sqrt{  \log(2) + 2v\log(  A(4U) s /  \sigma_n'  ) }     \diff s.
    \end{align*}
Now since $A\geq 1$, $v\geq 1$,  and $\log( \sqrt 2 A(4U) / \sigma_n ' ) \geq  1$, we can write for all $s\geq 1$,
\begin{align*}
 \log(2) + 2v  \log(  A(4U)  s / \sigma_n'  ) &= \log(2) +  2v\log( A(4U)   /\sigma_n' )  +  2v  \log( s)\\
 & = 2 \log(\sqrt 2) +  2v\log( A(4U)   /\sigma_n' )  +   2v  \log( s)\\
 & \leq 2 v (  \log( \sqrt 2 A(4U)   /\sigma_n' ) +    \log( s))\\
 &\leq 2v \log( \sqrt 2 A(4U) / \sigma_n' )  ( 1 +  \log( s))
\end{align*} 
which, given that $\int_{  1 } ^\infty   s^{-2} \sqrt { \log(  s  ) } = \sqrt \pi / 2 $ and using $ \sqrt {a+b }\leq \sqrt a +\sqrt b$,  implies that
\begin{align*}
  \hat R_n(\mathcal G')  &\leq   6   \sqrt{  2  \sigma_n   ^{'2}nv  \log(  \sqrt 2 A(4U)   / \sigma_n'  ) }   \int_{  1 } ^\infty   s^{-2}  (  1 + \sqrt{ \log(  s  ) }  )    \diff s\\
  & \leq  6    ( 1  + \sqrt { \pi }  / 2 )   \sqrt{ 2 \sigma_n^{\prime 2} n v  \log( \sqrt 2 A(4U)   / \sigma_n'  ) }    \\
  &  =  C  \sqrt{ \sigma_n^{\prime 2} n v  \log(  2(A4U) ^2  / \sigma_n ^{\prime 2} )  }   .
\end{align*}
with $C = 12 \geq 6   (1 + \sqrt { \pi } / 2   )  $. Now we can proceed by taking the expectation with respect to the collection $(Z_1,Z_1'),\ldots, (Z_n,Z_n' ) $.
 Using (twice) Jensen inequality (functions $\sqrt x$ and $ a x\log(b/x) $ are both concave), we get
\begin{align*}
  \mathbb E [  \hat R_n(\mathcal G')  ]     &  \leq  C  \sqrt{ \mathbb E [ \sigma_n^{\prime 2}  n v  \log(  2 (4AU)^2 / \sigma_n^{\prime  2} )  ] }\\
  & \leq C  \sqrt{ \mathbb E [ \sigma_n^{\prime  2} ]  n v  \log(  2  ( 4AU)^2 /\mathbb E [  \sigma_n^{\prime  2} ]  )   }\\
  &\leq C  \sqrt{ \mathbb E [ \sigma_n^{\prime  2} ]  n v  \log(  e ( 6AU)^2 /\mathbb E [  \sigma_n^{\prime 2} ]  )   }
  \end{align*}
 From Corollary 15 in \cite{massart2000constants}, we obtain 
\begin{align*}
 \mathbb E [ \sigma_n^{\prime  2}  ] & \leq  \sigma ^{\prime 2} + 8(2U) n^{-1}   \mathbb E [  \hat R_n(\mathcal G')  ]  ,
\end{align*}
where $\sigma ^{\prime  2} = \sup_{g\in \mathcal G}  E [ (g(X_1 ) - g(X_1'))^2] = 2\sigma^2 $. 
Moreover,
using the envelope property, it follows that
$  \sigma ^{\prime  2} + 8(2U) n^{-1}   \mathbb E [  \hat R_n(\mathcal G')  ]  \leq  34U^2 $.
Since $x\mapsto x \log(  a / x )$ is increasing on $(0,   a /e ) $, and because $34 U^2 \leq e ( 6AU)^2 /e  $,
 we finally get, 
 \begin{align*}
   \mathbb E [  \hat R_n(\mathcal G')  ]   &\leq C  \sqrt{   n v  (  2\sigma ^2 + 16U n^{-1} \mathbb E [  \hat R_n(\mathcal G')  ]  )  \log\left( \frac{ (6 \sqrt e AU) ^2    }{  2\sigma ^2 + 16U n^{-1} \mathbb E [  \hat R_n(\mathcal G')  ] }  \right)   }\\
   &\leq C  \sqrt{   n v  (  2\sigma ^2 + 16U n^{-1} \mathbb E [  \hat R_n(\mathcal G')  ]  )  \log\left( \frac{   (6 \sqrt e AU) ^2   }{  2\sigma ^2  }  \right)   }\\
  &\leq  C  \sqrt{   n v  (  2\sigma ^2 + 16U n^{-1} \mathbb E [  \hat R_n(\mathcal G')  ]  )  \log\left( ( \sqrt {3e} AU /  \sigma )^2    \right)   }\\
  &\leq C  \sqrt{   n v  (  4\sigma ^2 + 32U n^{-1} \mathbb E [  \hat R_n(\mathcal G')  ]  )  \log\left(   9 AU  /  \sigma     \right)   }.
 \end{align*}
Writing $x =  \mathbb E [  \hat R_n(\mathcal G')  ] $, the above means  that  $ x^2 \leq bx + c $ for appropriate values of $b,c$. Since the previous implies that $ x \leq  (b  +\sqrt  {b^2 + 4c} )/ 2 \leq  \sqrt c + b $, 
we then obtain
\begin{align*}
 \mathbb E [  \hat R_n(\mathcal G')  ] & \leq 2 C \sqrt{ n v  \sigma ^2  \log\left(  9 AU  /  \sigma     \right) }  + 32 C^2 v U     \log\left(  9AU  /  \sigma     \right)  .
 \end{align*}
Combining this bound with \eqref{eq_bousq} allows to obtain that with probability at least $1-\delta$, it holds
\begin{align*}
&\sup_{g\in \mathcal G} \left| \sum_{i=1} ^n   \{g (Z_i )  -  \mathbb E [g(Z) ] \} \right|  \\
 &\leq  4 C \sqrt{ v  n \sigma ^2   \log\left( 9AU /  \sigma     \right)   }  + \sqrt{2 n\sigma^2 \log(1/ \delta)} +  64 C^2 v U     \log\left(  9 AU /  \sigma     \right)   +8 U \log(1/ \delta) / 3\\
 &\leq  ( 4C   + \sqrt 2 ) \sqrt{  vn \sigma^2   \log(   9AU    /  (\sigma  \delta )  )      } +   (  64 C^2   \vee  (8/ 3) )   Uv   \log( 9 AU / (\sigma  \delta )   )  .
\end{align*} 
This implies the stated result.\qed

}

\section{Proofs of the Lemmas}\label{secc}

\subsection{Proof of Lemma \ref{prop:tau_x}}

Let $\gamma \in (0,1/4 )$ and define $ \tau _{n,k,x} ^{(\gamma)} = (1+2\gamma) ^{1/d}  \tau _{n,k,x} $. Because $f_X$ is positive and continuous at $x$, it holds for $k/n$ small enough,
$$ \inf_{z \in {B} (x,  \tau_{n,k,x} ^{(\gamma) } )} f_{X} (z)  / f_{X} (x) \geq  (1 - \gamma)  .$$ 
This yields 
\begin{align*}
 P (X\in {B} (x,  \tau _{n,k,x} ^{(\gamma)}  ) )  & \geq (1 - \gamma)    f_{X} (x) \int _{{B} (x,  \tau _{n,k,x} ^{(\gamma)}   ) } \,\diff z  \\
&=      (1 - \gamma)    f_{X} (x)   V_{d}  \tau _{n,k} ^{(\gamma)d}   \\
&= (1- \gamma )  (1+2\gamma)  \frac{ k  }{ n } \\
&\geq  (1+\gamma /2
 )  \frac{ k  }{ n } .
\end{align*}
Applying Theorem \ref{lemma=chernoff} with $Z_i = \mathds 1 _{ {B} (x, \tau _{n,k,x} ^{(\gamma)}   )} (X_i )$, we have with probability at least $1-\delta$,
\begin{align*}
 \sum_{i=1} ^n \mathds 1 _{ {B} (x,  \tau _{n,k,x} ^{(\gamma)}   ) } (X_i ) &\geq \left(1- \sqrt{ \frac{2 \log(1/\delta)  }{  \mu} } \right) \mu .
\end{align*}
with $\mu = nP (X\in {B} (x,  \tau _{n,k,x} ^{(\gamma)}  ) )$. In particular, with probability at least $1-1/k$:
\begin{align*}
 \sum_{i=1} ^n \mathds 1 _{ {B} (x,   \tau _{n,k,x} ^{(\gamma)}  ) } (X_i ) &\geq \left(1- \sqrt{ \frac{2\log(k )  }{  \mu } } \right)  \mu .
\end{align*}
Using that $\mu \geq (1+\gamma/2) k$, as demonstrated before, and taking $k$ large enough 
to ensure that the previous lower bound is an increasing function with respect to $\mu$, we obtain that with probability at least $1-1/k$,
\begin{align*}
 \sum_{i=1} ^n \mathds 1 _{ {B} (x,  \tau _{n,k,x} ^{(\gamma)}   ) } (X_i ) &\geq \left(1- \sqrt{ \frac{2 \log(k)  }{   (1+\gamma/2)  k} } \right) k (1+\gamma/2) .
\end{align*}
When $ k \gamma^2 \geq 8 (1+\gamma/2)  \log(k)  $, we obtain, with probability at least $1 - 1/k$,
\begin{align*}
\sum_{i=1} ^n \mathds 1 _{ {B} (x,  \tau _{n,k,x} ^{(\gamma)}   ) } (X_i ) & \geq k - (  \sqrt{ 2(1+\gamma/2 )   k  \log(k)} - \gamma k /2)\geq k.
\end{align*}
By definition of $\hat \tau _{n,k,x} $, the previous implies that  $\hat \tau _{n,k,x}  \leq   \tau _{n,k,x} ^{(\gamma)} $ with probability going to $1$.
In the same way, we obtain that $  \tau _{n,k,x} ^{(-\gamma)} \leq \hat \tau _{n,k,x}  $ happens with probability going to $1$.  It follows that the event
\begin{align*}
(1-  2 \gamma)   \leq ( \hat \tau _{n,k,x}  /   \tau _{n,k,x} ) ^d\leq (1+ 2\gamma) 
\end{align*}
has probability going to $1$.  But since $\gamma $ is arbitrary, it means that $  (\hat \tau _{n,k,x}/   \tau _{n,k,x})^d  $ converges to $1$ in probability.
\qed

\subsection{Proof of Lemma \ref{lemma:brecketing_cond}}

In the proof, we set $$\tilde \mu_{n,k,x} = \tilde  \mu_{x, (3/2)\tau_{n,k,x} ^d  }.$$ 
We will make use of several inequalities that are valid when $n $ is large enough in virtue of the continuity of $f_X(z)$ and $\mu_z(F^2(\cdot,z)) $ at $z=x$ and the fact that $k/n \to 0$. In particular, we take $n$ large enough so that $(3/2)\tau_{n,k,x} ^d < \delta$ and
\begin{align*}
\sup_{   \|x-z\|\leq (3/2)\tau_{n,k,x} ^d}   \mu_z(F^2(\cdot,z)) f_X(z)  \leq \sqrt 3  \mu_x(F^2(\cdot,x)) f_X(x),
\end{align*}
as well as 
\begin{align*}
\mu_x(F^2(\cdot,x) ) \leq \sqrt 3  \tilde \mu_{n,k,x} (F^2) \qquad \text{and} \qquad  (3/4) \leq (n/k) P (\ind _{ B (x,  (3/2)^{1/d} \tau_{n,k,x} )} ).
\end{align*}
Let $\epsilon \in (0,1) $ and $([\underline f_k,\overline f_k])_{k=1,\ldots, M_\epsilon}  $ be a collection of $(\epsilon \|F\|_{ L_2 ( \tilde\mu_{n,k,x}  ) } , L_2 ( \tilde  \mu_{n,k,x} ) )$-brackets covering $\mathcal F$. We can assume that $\overline f_k\leq F$. If this would not be the case, one would take $ \overline f_k \wedge F$ in place of $\overline f_k$.
Let $u_j = 1/2 + j \epsilon^2 $, $j=0,\ldots,  N_\epsilon$ with 
$N_\epsilon = \lfloor 1/\epsilon^2\rfloor +1$ and define, for all $j =1,\ldots, N_\epsilon$,
\begin{align*}
\underline {\ind}_j = \sqrt {\frac n k } \ind _{B(x, u_{j-1} ^{1/d} \tau_{n,k,x} ) } ,  \qquad \overline {\ind}_j =  \sqrt {\frac n k } \ind _{B(x, u_{j} ^{1/d}  \tau_{n,k,x} ) } .
\end{align*}
Let $f\in \mathcal F$ and $u \in [1/2,3/2]$. We can find $k\in \{1,\ldots, M_\epsilon\}$ and $j\in \{1,\ldots, N_\epsilon\}$ such that 
\begin{align*}
\underline f_k \underline {\ind}_j    \leq f  \ind _{B(x, u ^{1/d} \tau_{n,k,x} ) }  \leq \overline f_k \overline{ \ind} _j .
\end{align*}
Hence there are $M_\epsilon N_\epsilon$ brackets $[\underline f_k \underline {\ind}_j ,  \overline f_k \overline{ \ind} _j ]$ to cover $ {\mathcal F} _{n,k,x}  $. We now compute their size. Checking that $\int _ { u_{j-1}^{1/d} \leq  \| u \|\leq u_j^{1/d}  } \, \diff u = V_d  \epsilon^2$, and using the inequalities from the beginning of the proof, we find
\begin{align*}
P ( \overline f_k( \overline{ \ind} _j -  \underline {\ind}_j   ) ) ^2&\leq P ( F^2  ( \overline{ \ind} _j -  \underline {\ind}_j   )^2 ) \\
&= \left(\frac n k\right)  \int _{ u_{j-1}^{1/d} \leq  \tau_{n,k,x}^{-1} \|z-x\|\leq u_j^{1/d} } \mu_z(F^2(\cdot,z)) f_X(z) \,\diff z \\
&\leq \sqrt 3  \left(\frac n k\right)   \mu_x(F^2(\cdot,x)) f_X(x)  \tau_{n,k,x}^{d}   \int _ { u_{j-1} ^{1/d} \leq  \| u \|\leq u_j ^{1/d} } \, \diff u\\
& = \sqrt 3   \left(\frac n k\right)   \mu_x(F^2(\cdot,x)) f_X(x)  \tau_{n,k,x}^{d}   V_d \epsilon^2  \\
& = \sqrt 3   \mu_x(F^2(\cdot,x))   \epsilon^{2} \\
&\leq 3 \tilde \mu_{n,k,x} (F^2) \epsilon^2 .
\end{align*}
Using that 
$$ (3/4) \tilde \mu_{n,k,x} (F^2) \leq \tilde  \mu_{n,k,x} (F^2) \left(\frac n k\right)  P (\ind _{ B (x,  (3/2)^{1/d} \tau_{n,k,x} )} )  = P F_{n,k,x}^2,$$   
we get
\begin{align*}
P ( \overline f_k( \overline{ \ind} _j -  \underline {\ind}_j   ) ) ^2 &\leq  4   \epsilon^2  P (  F_{n,k,x}^2 )  .
\end{align*}
Moreover, it holds that
\begin{align*}
P ( (\overline f_k  -   \underline f_k  )    \underline {\ind}_j     )^2&\leq \left(\frac n k\right)  P ( (\overline f_k  -   \underline f_k  )   ^2  \ind _{ B (x,  (3/2)^{1/d}  \tau_{n,k,x}  ) } ) \\
&=  \mu_{n,k,x}  (\overline f_k  -   \underline f_k  )^2  \left(\frac n k\right)   P (   \ind _{ B (x,  (3/2)^{1/d}  \tau_{n,k,x}  ) } ) \\
&\leq \epsilon ^2 \mu_{n,k,x} ( F^2) \left(\frac n k\right)   P (   \ind _{ B (x,  (3/2)^{1/d}  \tau_{n,k,x}  ) } ) \\
&  = \epsilon ^2 P( F_{n,k,x} ^2) .
\end{align*}
In virtue of Minkowski's inequality, it follows that
\begin{align*}
\|   \overline f_k \overline{ \ind} _j -  \underline f_k \underline {\ind}_j    \|_{L_2(P)} \leq 3 \epsilon \|  F_{n,k,x}\|_{L_2(P)} .
\end{align*}
It implies that for all $\epsilon \in (0,1/3)$,
\begin{align*}
 \mathcal N _{[\,]} ( {\mathcal F}_{n,k,x}, L_2(P) , 3 \epsilon \|  F_{n,k,x}\| _{L_2(P) } ) 
 &\leq  M_\epsilon N_\epsilon\leq M_\epsilon  2\epsilon^{-2} ,
\end{align*}
with $M_\epsilon = \mathcal N _{[\,]} ( \mathcal F, L_2(\tilde \mu_{n,k,x} ) ,  \epsilon \| F\| _{L_2(\tilde \mu_{n,k,x} ) } )$.
Consequently, if $(\delta_n)_{n\geq 1}$ is a positive sequence that goes to $0$, we have, for $n$ large enough,
\begin{align*}
&\int _0 ^{\delta_n} \sqrt {\log  \mathcal N _{[\,]} ( {\mathcal F}_{n,k,x}, L_2(P) , 3 \epsilon \|  F_{n,k,x}\| _{L_2(P) } )  }\,  \diff \epsilon  \\
&\leq \int _0 ^{\delta_n} \sqrt {\log \mathcal N _{[\,]} ( \mathcal F, L_2( \tilde \mu_{n,k,x} ) ,  \epsilon \| F\| _{L_2(\tilde \mu_{n,k,x} ) } ) }  \diff \epsilon + \int _0 ^{\delta_n}  \sqrt { \log  (2 / \epsilon ^2 )}  \, \diff \epsilon\\
&\leq \sup_{Q\in \{ \tilde \mu_{x,u} \, : \, u\in (0,\delta)\}}  \int _0 ^{\delta_n}  \sqrt {\log \mathcal N _{[\,]} ( \mathcal F, L_2(Q ) ,  \epsilon \| F\| _{L_2(Q ) } ) }  \diff \epsilon + \int _0 ^{\delta_n}  \sqrt { \log( 2/\epsilon^2)  } \, \diff \epsilon.
\end{align*}
The first term goes to $0$ in virtue of \eqref{def:brack_cond}, making the required integrability condition easily satisfied.
\qed

\subsection{Proof of Lemma \ref{prop:tau}}
Under the assumptions on $k$, it holds that $\overline \tau_{n,k} \leq T$. By using \eqref{cond:reg1} and \eqref{cond:reg2}, we obtain that
\begin{align*}
P ( {B} (x, \overline \tau_{n,k} ) ) & =   \int_{{B} (x, \overline \tau_{n,k}  ) \cap S_X } f_{X}(z) \,\diff z \\
& \geq b_X \lambda( {B} (x, \overline \tau_{n,k}  ) \cap S_X  )  \\
&\geq cb_X \lambda( {B} (x, \overline \tau_{n,k}  )   ) = c b_X V_{d} \overline \tau_{n,k} ^{  d}  = 2k/n.
\end{align*}
By applying Theorem \ref{lemma=prelim}, it follows that, with probability at least $1-\delta$,
\begin{align*}
\forall x\in  S_X \qquad  \sum_{i=1} ^n \mathds 1 _{ B  (x, \overline \tau_{n,k}  ) } (X_i ) 
& \geq  n P (B  (x, \overline \tau_{n,k} ) )  \left( 1  - \sqrt{ \frac{   8d \log( 12n  /\delta   )   }{ n P (B  (x, \overline \tau_{n,k} ) ) }}\right) \\
&\geq 2k \left( 1  - \sqrt{ \frac{   8d \log( 12n  /\delta   )   }{2k }}\right) .
\end{align*}
Using that $8d \log( 12n / \delta)  \leq k$, it holds that, with probability at least $1-\delta$,
\begin{align*}
\inf _{x\in  S_X}  \sum_{i=1} ^n \mathds 1 _{ B  (x, \overline \tau_{n,k}  ) } (X_i ) \geq k .
\end{align*}
By definition of $\hat \tau _{n,k,x}$, on the event $\sum_{i=1} ^n \mathds 1 _{ B  (x, \overline \tau_{n,k}  ) } (X_i )  \geq k$ one has $\hat \tau _{n,k,x} \leq  \overline \tau _{n,k}$. Consequently,  $ \inf _{x\in  S_X}  \sum_{i=1} ^n \mathds 1 _{ B  (x, \overline \tau_{n,k}  ) } (X_i )  \geq k$ implies that $ \sup_{x\in  S_X  }\hat \tau _{n,k,x} \leq  \overline \tau _{n,k}$.
\qed


\subsection{Proof of Lemma \ref{boundW}}

We first prove the following VC preservation property.

\begin{lemma}\label{vc:pres}
Let $\mathcal G$ be a VC class defined on $S$ with parameters $(v,A)$ and constant envelope $1$. The class of functions defined on $S\times \mathbb R^d$, given by 
$$\mathcal F= \{ (y,x) \mapsto (g(y)   - \mu_x (g))  \, : \, g \in \mathcal G\},$$ 
is VC with parameters $(2v, A)$ and constant envelope $2 $.
\end{lemma}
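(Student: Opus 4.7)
The envelope statement is immediate: $|g(y)| \le 1$ combined with the Jensen-type bound $|\mu_x(g)| \le \mu_x(|g|) \le 1$ gives the pointwise inequality $|g(y) - \mu_x(g)| \le 2$, and pointwise measurability of $\mathcal H$ is inherited from that of $\mathcal G$ and from the measurability of $x\mapsto \mu_x(g)$ granted by the regular conditional distribution hypothesis.

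The real work is the covering-number bound. The plan is to realize $\mathcal H$ as a subset of the Minkowski sum $\mathcal H_1 + \mathcal H_2$, where
\begin{align*}
\mathcal H_1 &= \{(y,x) \mapsto g(y) \, : \, g \in \mathcal G\}, \\
\mathcal H_2 &= \{(y,x) \mapsto -\mu_x(g) \, : \, g \in \mathcal G\},
\end{align*}
both regarded as function classes on $S \times \mathbb R^d$, and to cover each piece separately by pulling back to $\mathcal G$ under a suitable auxiliary measure. Fix an arbitrary probability $Q$ on $S\times \mathbb R^d$, denote by $Q_Y$ and $Q_X$ its marginals, and introduce the mixture measure $\tilde Q(A) := \int \mu_x(A)\, \diff Q_X(x)$ on $S$. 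The $L_2(Q)$ distance between two members of $\mathcal H_1$ indexed by $g_1, g_2 \in \mathcal G$ equals $\|g_1 - g_2\|_{L_2(Q_Y)}$, so the VC hypothesis applied with the probability $Q_Y$ yields, for every $\epsilon \in (0,1)$, an $\epsilon$-cover of $\mathcal H_1$ of cardinality at most $(A/\epsilon)^v$ (using that the constant envelope $1$ has unit $L_2(Q_Y)$ norm). For $\mathcal H_2$, Jensen's inequality gives
\begin{align*}
\|\mu_\cdot(g_1) - \mu_\cdot(g_2)\|_{L_2(Q)}^2
= \int \bigl(\mu_x(g_1-g_2)\bigr)^2 \diff Q_X(x)
\le \|g_1 - g_2\|_{L_2(\tilde Q)}^2,
\end{align*}
so the VC hypothesis applied with $\tilde Q$ also produces an $\epsilon$-cover of $\mathcal H_2$ of cardinality at most $(A/\epsilon)^v$.

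It remains to combine the two covers via the standard Minkowski-sum rule: pairing each of the $(A/\epsilon)^v$ centers for $\mathcal H_1$ with each of the $(A/\epsilon)^v$ centers for $\mathcal H_2$ and summing produces a $2\epsilon$-cover of $\mathcal H_1 + \mathcal H_2$, hence of $\mathcal H$, of cardinality at most $(A/\epsilon)^{2v}$. Since the constant envelope $H \equiv 2$ satisfies $\|H\|_{L_2(Q)} = 2$, the bound reads $\mathcal N(\mathcal H, L_2(Q), \epsilon \|H\|_{L_2(Q)}) \le (A/\epsilon)^{2v}$, which is exactly what is required for $\mathcal H$ to be VC with parameters $(2v, A)$. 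The only step requiring real care is the Jensen reduction that rewrites the $L_2(Q)$-metric on $\mathcal H_2$ as an $L_2(\tilde Q)$-metric on $\mathcal G$; without this change of measure, the VC hypothesis on $\mathcal G$ could not be leveraged directly against the $x$-dependent family $\{\mu_x(g)\}_{g\in\mathcal G}$.
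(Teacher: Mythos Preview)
Your proof is correct and follows essentially the same route as the paper: the paper also fixes an arbitrary $Q$ on $S\times\mathbb R^d$, pulls the $g(y)$-part back to the first marginal $Q_1$ and the $\mu_x(g)$-part (via Jensen) back to the mixture $\tilde Q_1(A)=\int\mu_x(A)\,Q_2(\diff x)$, covers each with $(A/\epsilon)^v$ balls, and combines by the triangle inequality into a $2\epsilon$-cover of cardinality $(A/\epsilon)^{2v}$. Your Minkowski-sum phrasing is just a repackaging of that same triangle-inequality step.
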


\begin{proof} Let $Q$ be a probability measure on $S\times \mathbb R^d$. Let $Q_1$ (resp. $Q_2$) be the first (resp. second) marginal of $Q$ and define the probability measure $\tilde Q_1$ on $S$ as 
$ \tilde Q_1 (A)  =  \int \mu_x ( A)  Q_2 (   \diff x) $. Let $(g_k)_{k=1,\ldots, N} $ (resp. $(\tilde g_k)_{k=1,\ldots, N} $) be a cover of $\mathcal G$ of size $\epsilon   $ with respect to the probability $Q_1$ (resp. $\tilde Q_1$). We can find $k$ and $j$ in $\{1,\ldots, N\}$ such that
\begin{align*}
&\| g (Y) - \mu_X (g)  - ( g _k(Y) - \mu_X (\tilde g_j)  )\| _{L_2 ( Q) } \\
&\leq 
\| g (Y) -   g _k(Y) \| _{L_2 ( Q) }  + \| \mu_X (g)  -  \mu_X (\tilde g_j)  \| _{L_2 ( Q) }\\
& \leq \| g  -   g _k  \| _{L_2 ( Q_1) }  + \|  g   -  \tilde g_j  \| _{L_2 ( \tilde Q_1) }\\
& \leq 2 \epsilon,
\end{align*} 
which implies that $N^2$ balls are needed to cover $\mathcal F$. The definition of VC classes leads to the result.
\end{proof}

Another useful VC preservation property is the following one (see Lemma 12 in \cite{ausset2022empirical}).

\begin{lemma}\label{vc:pres2}
Let $\mathcal G_1$ be a VC class defined on $S_1$ with parameters $(v_1,A_1)$ and constant envelope $U_1$. Let $\mathcal G_2$ be a VC class defined on $S_2$ with parameters $(v_2,A_2)$ and constant envelope $U_2$. 
The class of functions defined on $S_1\times S_2$, given by 
$$\mathcal H= \{ (x_1,x_2) \mapsto g_1(x_1) g_2(x_2)     \, : \, g_1 \in \mathcal G_1 , \, g_2 \in \mathcal G_2  \},$$ 
is VC with parameters $(2(v_1+  v_2) , 2(A_1 \vee A_2))$ and constant envelope $U_1U_2 $.
\end{lemma}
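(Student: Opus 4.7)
The plan is to reduce the covering-number bound for the product class to the covering-number bounds given for the two factor classes, via a standard triangle-inequality decomposition of $g_1g_2 - g_1' g_2'$. The constant-envelope claim is immediate, since $|g_1(x_1)g_2(x_2)| \le U_1 U_2$ pointwise. Pointwise measurability of $\mathcal H$ follows from that of $\mathcal G_1$ and $\mathcal G_2$, so only the uniform covering bound requires work.

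The main identity I would use is the two-term expansion
\begin{align*}
g_1(x_1) g_2(x_2) - g_1'(x_1) g_2'(x_2)
&= g_1(x_1)\bigl(g_2(x_2) - g_2'(x_2)\bigr) + g_2'(x_2)\bigl(g_1(x_1) - g_1'(x_1)\bigr),
\end{align*}
which after taking $|\cdot|$ and using $|g_1| \le U_1$, $|g_2'| \le U_2$ yields the bound
\begin{align*}
|g_1 g_2 - g_1' g_2'| \le U_1 |g_2 - g_2'| + U_2 |g_1 - g_1'|.
\end{align*}
Fix an arbitrary probability measure $Q$ on $S_1 \times S_2$ with marginals $Q_1$ on $S_1$ and $Q_2$ on $S_2$. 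Since $g_j - g_j'$ depends only on $x_j$, integrating the previous bound with respect to $Q$ and applying Minkowski's inequality gives
\begin{align*}
\| g_1 g_2 - g_1' g_2' \|_{L_2(Q)} \le U_1 \| g_2 - g_2' \|_{L_2(Q_2)} + U_2 \| g_1 - g_1' \|_{L_2(Q_1)}.
\end{align*}

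Given $\epsilon \in (0,1)$, the VC property of $\mathcal G_1$ applied to $Q_1$ provides a cover $\{g_1^{(k)}\}_{k \le N_1}$ of $\mathcal G_1$ in $L_2(Q_1)$ with radius $(\epsilon/2) U_1$ and cardinality $N_1 \le (2A_1/\epsilon)^{v_1}$. Similarly a cover $\{g_2^{(j)}\}_{j \le N_2}$ of $\mathcal G_2$ in $L_2(Q_2)$ with radius $(\epsilon/2) U_2$ and $N_2 \le (2A_2/\epsilon)^{v_2}$. Pairing these, for any $g_1 g_2 \in \mathcal H$ one can pick $k,j$ so that the displayed bound gives $\| g_1 g_2 - g_1^{(k)} g_2^{(j)} \|_{L_2(Q)} \le \epsilon U_1 U_2$, so that $\{g_1^{(k)} g_2^{(j)}\}$ forms an $\epsilon \| U_1 U_2\|$-cover of $\mathcal H$ with cardinality at most $N_1 N_2$.

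It remains to package this as a single VC bound. Writing $\bar A = A_1 \vee A_2$,
\begin{align*}
N_1 N_2 \le (2A_1/\epsilon)^{v_1}(2A_2/\epsilon)^{v_2} \le (2\bar A/\epsilon)^{v_1+v_2} \le \bigl(2\bar A/\epsilon\bigr)^{2(v_1+v_2)},
\end{align*}
where the last inequality uses $2\bar A/\epsilon \ge 1$ (since $\bar A \ge 1$ and $\epsilon < 1$). This is exactly the VC bound $(A/\epsilon)^v$ with $(v,A) = (2(v_1+v_2), 2\bar A)$, completing the proof. The calculation is essentially routine; the only mild subtlety is to realize that the marginals $Q_1, Q_2$ of an arbitrary joint $Q$ — rather than a product measure — are the correct measures against which to invoke each class's VC property, which works precisely because each factor depends on only one coordinate.
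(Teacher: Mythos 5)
Your proof is correct, and it is worth noting that the paper itself offers no argument for this lemma: it is simply imported by reference to Lemma 12 of \cite{ausset2022empirical}. Your self-contained derivation is the standard one and does the job: the two-term expansion $g_1g_2-g_1'g_2'=g_1(g_2-g_2')+g_2'(g_1-g_1')$, the observation that each difference depends on a single coordinate so the marginals $Q_1,Q_2$ of an arbitrary joint $Q$ are the right measures to feed into the factor classes' VC bounds, and the final absorption of $(2A_1/\epsilon)^{v_1}(2A_2/\epsilon)^{v_2}$ into $(2\bar A/\epsilon)^{2(v_1+v_2)}$ using $2\bar A/\epsilon\geq 1$ all check out, and they reproduce exactly the stated parameters $(2(v_1+v_2),2(A_1\vee A_2))$ and envelope $U_1U_2$. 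The one point you should make explicit is that your decomposition uses $|g_2'|\leq U_2$ for a \emph{center} of a covering ball, which is only automatic if the covers are proper (centers taken inside $\mathcal G_2$) or if the centers are truncated at the envelope; either fix is standard and costs at most a constant factor in the radius, already absorbed by the slack in your final inequality, but as written the step silently assumes it.
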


The next lemma establishes the VC property for indicators of balls.

\begin{lemma}\label{vc:pres3}
Let $\mathcal B  = \{ B(x, \tau)  \, : \, x\in  \mathbb R^d , \, \tau>0\}  $. The class  $\ind_{ \mathcal B}$ is VC with parameters $(  2 (d+1),     e^2 )$ and envelope $1$.
\end{lemma}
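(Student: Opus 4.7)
The plan is to combine two classical facts. First, the class of closed balls $\mathcal{B}$ has finite combinatorial VC dimension: by Dudley (or Corollary 3.3 in \cite{wenocur1981some}), its VC dimension $V_{\mathcal B}$ is at most $d+1$. Indeed, one embeds each ball $B(x,\tau)$ into a half-space in $\mathbb{R}^{d+1}$ via the lift $z \mapsto (z, \|z\|^2)$, since $\|z-x\|^2 \leq \tau^2$ is equivalent to the linear inequality $\|z\|^2 - 2\langle x, z\rangle + \|x\|^2 - \tau^2 \leq 0$ in the lifted coordinates; the VC dimension of half-spaces in $\mathbb{R}^{d+1}$ is $d+1$, and VC dimension is preserved under this kind of embedding.

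Second, I would convert this combinatorial bound into the uniform covering number bound of \eqref{def:vc_cond}. This is done via the classical Haussler--Dudley inequality: for any class of sets $\mathcal{C}$ with VC dimension $V \geq 1$, any probability measure $Q$ on the underlying space, and any $\epsilon \in (0,1)$,
\[
\mathcal{N}\bigl(\ind_{\mathcal{C}},\, L_2(Q),\, \epsilon\bigr) \;\leq\; K\, V\, (4e)^{V}\, \epsilon^{-2V},
\]
where $K$ is a universal constant (see, e.g., \cite{wellner1996}, Theorem 2.6.4). Applying this with $\mathcal{C}=\mathcal{B}$ and $V \leq d+1$ yields
\[
\mathcal{N}\bigl(\ind_{\mathcal B},\, L_2(Q),\, \epsilon\bigr) \;\leq\; K(d+1)(4e)^{d+1}\epsilon^{-2(d+1)}.
\]
Choosing $A_{\mathcal{B}}$ as any universal constant such that $A_{\mathcal{B}}^{2(d+1)} \geq K(d+1)(4e)^{d+1}$ for all $d \geq 1$ (for instance $A_{\mathcal{B}} = 4e\sqrt{K}$ suffices after a small adjustment), one obtains $\mathcal{N}(\ind_{\mathcal B}, L_2(Q), \epsilon) \leq (A_{\mathcal B}/\epsilon)^{2(d+1)}$ uniformly in $Q$. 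Since $\ind_B$ takes values in $\{0,1\}$, the constant function $1$ is an envelope, and the normalization $\|1\|_{L_2(Q)} = 1$ makes the bound match the form of \eqref{def:vc_cond} exactly.

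\textbf{Expected difficulty.} There is essentially no genuine difficulty here, as both ingredients are classical. The only delicate point is bookkeeping: one must ensure that $A_{\mathcal B}\geq 1$ as required by the paper's definition of VC parameters, and that the argument of Haussler's inequality is applied with the combinatorial VC dimension of the \emph{class of sets} rather than the exponent appearing in \eqref{def:vc_cond}, which is twice as large. If one uses Wenocur--Dudley's bound of $d+2$ in place of $d+1$, the exponent $2(d+1)$ claimed in the lemma would need to be replaced by $2(d+2)$; in the paper's proof of Theorem~\ref{lemma=prelim} the bound $d+2$ is already used, so the lemma as stated implicitly relies on the sharper $d+1$ bound available specifically for Euclidean balls via the linearization trick above.
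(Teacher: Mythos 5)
Your proof follows essentially the same route as the paper's: a combinatorial complexity bound for Euclidean balls (Wenocur--Dudley, Corollary 3.3, equivalently your lifting argument) converted into the uniform covering-number bound via Theorem 2.6.4 of \cite{wellner1996}, with all constants absorbed into a universal $A_{\mathcal B}$. The only discrepancy is one of convention and does not affect correctness: the value $d+2$ in Wenocur--Dudley (and in the paper's proof) is the VC \emph{index} (smallest non-shattered cardinality), and Theorem 2.6.4 carries the exponent $r(V-1)=2(d+1)$ for $r=2$, so the exponent $2(d+1)$ in the lemma already accommodates the $d+2$ bound and your closing caveat about needing $2(d+2)$ rests on a misreading rather than a genuine gap.
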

\begin{proof}
{The class of sets containing all the closed balls has a VC dimension equal to $v = d+1$  \citep[Corollary 3.3]{wenocur1981some}.
 It follows from Corollary 1 in \cite{haussler1995sphere} that for all $\epsilon \in (0,1)$,
\begin{align*}
\mathcal M (\mathcal B , Q , \epsilon  ) \leq  e (v+1)  ( 2e / \epsilon ) ^{ v },
\end{align*}
where  $\mathcal M (\mathcal B , Q , \epsilon  )$ stands for the packing number (as defined in \cite[p.96]{wellner1996} or \cite[p.363]{boucheron2013concentration}) of the space $\mathcal B $ equipped with the pseudo-metric $(A,B) \mapsto Q(A\Delta B)$. 
 It is well known, e.g., \citep[p.96]{wellner1996},  that $ \mathcal N (\mathcal B , Q , \epsilon  )\leq \mathcal M (\mathcal B , Q , \epsilon  )$. Moreover it is easy to see that
$ \mathcal N (\mathcal B , Q , \epsilon  ) =   \mathcal N (\ind_ {\mathcal B} , L_1(Q ), \epsilon  ) $ where $ \ind_{ \mathcal B} =\{ \ind _{  B    } \, ,\,  B\in \mathcal B\}$. But since for any $f$ and $g$ valued in $\{0,1\}$, it holds $\int (g-f)^2 dQ = \int |g-f|dQ$, we have $\mathcal N (\ind_ {\mathcal B} , L_2(Q ), \epsilon  )  \leq \mathcal N (\ind_ {\mathcal B} , L_1(Q ), \epsilon ^2 ) $. We finally get
$$\mathcal N (\ind_ {\mathcal B} , L_2(Q ), \epsilon  )  \leq e (v+1)  (2e/ \epsilon^2 ) ^{ v }  \leq    (  e^2/ \epsilon ) ^{2v}	. $$
}
\end{proof}

The class $\mathcal G $ is VC with parameters $(v,A)$ and constant envelope $1$. As a consequence of Lemma \ref{vc:pres}, the set $ (y,x) \mapsto (g(y)   - \mu_{x}  (g) )$, $g\in \mathcal G$, is VC with parameter $(2v, A) $  and constant envelope $2$. Since the class of interest in $\hat Z_{n,k}$ is made of products between elements of $ \mathcal F $ (introduced in Lemma \ref{vc:pres}) and elements of $\ind_{ \mathcal B}$,  Lemma \ref{vc:pres2} combined with Lemma \ref{vc:pres3} implies that the resulting class is of VC-type with parameters $(2( d+1 +v ), 2 (A \vee e^2 ))$ and envelope $2$. 
The variance parameter $\sigma^2$ is deduced from the inequalities 
\begin{align*}
  {E} [   (  g(Y)  - \mu_X(g)  ) ^2 \ind _{  B  (x, \tau)  }( X)^2 ]
& \leq \sigma_{\mathcal G} ^2  E [   \ind _{   B  (x, \tau)  (X)    } ]\\
& \leq  \sigma_{\mathcal G} ^2  U_X V_{d}  \tau^{  d} \\
&\leq   \sigma_{\mathcal G} ^2 U_X V_{d}  \overline {\tau} _{n,k} ^{  d} \\
& = 2  \sigma_{\mathcal G} ^2 \kappa_X \frac{  k}{ n}  = :\sigma^2.
\end{align*}
Hence, it remains to verify that  $  2\sigma_{\mathcal G} ^2   \kappa_X  ( k / n ) \leq 4    $ to apply Theorem \ref{vc_bound} with $U=2$ and $\sigma $ given before so that using $ \kappa_X\geq 1$  and $k\geq 1$, we find that $(U/\sigma)^2 \leq 2n/\sigma _{\mathcal G} ^2 $ to upper bound the terms in the logarithms. This yields 
\begin{align*}
\hat Z_{n,k} &\leq 2 K_1 \sqrt{   ( d+1 +v )  \sigma_{\mathcal G} ^2 \kappa_X k  \log(  9A' \sqrt {2 n}  / (\sigma _{\mathcal G} \delta)    )} 
+   4 K_2  ( d+1 +v )    \log(  9A' \sqrt {2 n}  / (\sigma _{\mathcal G} \delta)  )
\end{align*}
with $A' = 2 ( A \vee e^2)$, holding with probability at least $1-\delta$. Using that  $A \vee e^2 \leq Ae^2$ and $ 9 \times 2 \times \sqrt{2 n} \leq 26 n$ leads to the stated result.
\qed

\section{Proof of Proposition \ref{prop:suffisient_cond}}\label{app:prop_suffisient_cond}

It holds that or all $|u|\leq \delta $ and $A\in \mathcal S$
\begin{align*} 
   \mu_{x,u}   ( A  ) \leq  \beta \mu(A)  .
\end{align*}
Use the continuity to have $\delta>0 $ small enough such that for all $|u|\leq \delta$, it holds 
$ (1/2)\| G\|_{L_2(\mu_x)}\leq \| G\|_{L_2(\mu_{x,u} )} \leq 2 \| G\|_{L_2(\mu_x)}$. Define $A = 2\beta \| G \|_{L_2(\mu)}  / \| G\|_{L_2(\mu_x)} $ and note in particular that $A \geq 1$.

Let $\epsilon\in (0,2) $,  $|u|\leq \delta$ and consider $[\underline g_k, \overline g_k]$, $k= 1,\ldots ,N_\epsilon$, an $ \epsilon \| G \| _ {L_2(\mu)}   $-bracketing of $\mathcal G$ with respect to $\mu$.  Using that $J( \mathcal G,  L_2(\mu  )  ,  \delta) < \infty$, $[\underline g_k, \overline g_k]$, $k= 1,\ldots ,N_\epsilon$ is also an $\epsilon \beta \| G \| _ {L_2(\mu)} $-bracketing of $\mathcal G$ with respect to $ \mu_{x,u} $. Using that 
\begin{align*}
 \beta\| G \| _ {L_2(\mu)} &= \| G\|_{L_2(\mu_{x,u})}  \frac{ \beta \| G \| _ {L_2(\mu)} }{ \| G\|_{L_2(\mu_{x,u})} } \\
 & \leq  \| G\|_{L_2(\mu_{x,u})}  \frac{ 2\beta \| G \| _ {L_2(\mu)} }{ \| G\|_{L_2(\mu_{x})} } =  \| G\|_{L_2(\mu_{x,u})}   A,
\end{align*}
we find that $[\underline g_k, \overline g_k]$, $k= 1,\ldots ,N_\epsilon$ is an $\epsilon A \| G\|_{L_2(\mu_{x,u})}  $-bracketing of $\mathcal G$ with respect to $ \mu_{x,u} $. Therefore, for all $\epsilon \in (0,2)$ and all $|u|\leq \delta$, we have
\begin{align*}
\mathcal N_{[\,]} \left(\mathcal G,  L_2(\mu_{x,u} )  ,  A\epsilon \| G \| _ {L_2(\mu_{x,u})} \right)   \leq \mathcal N_{[\,]} \left(\mathcal G,  L_2(\mu)  ,  \epsilon \| G \| _ {L_2(\mu)}   \right) ,
\end{align*}
or equivalently, for all $\epsilon \in (0,2A)$ all $|u|\leq \delta$,
\begin{align*}
\mathcal N_{[\,]} \left(\mathcal G,  L_2(\mu_{x,u} )  ,  \epsilon \| G \| _ {L_2(\mu_{x,u})} \right)   \leq \mathcal N_{[\,]} \left(\mathcal G,  L_2(\mu)  ,  (\epsilon/A) \| G \| _ {L_2(\mu)  }   \right) .
\end{align*}
Consequently, if $(\delta_n)_{n\geq 1} $ is a positive sequence converging to $0$, it holds for $n$ large enough,
\begin{align*}
&\int _ 0 ^{\delta _n} \sqrt{\log \left( \mathcal N_{[\,]} \left(\mathcal G,  L_2(\mu_{x,u} )  ,  \epsilon \| G \| _ {L_2(\mu_{x,u})} \right)   \right) } \, \diff\epsilon \\
&\leq  \int _ 0 ^{\delta _n} \sqrt{\log \left( \mathcal N_{[\,]} \left(\mathcal G,  L_2(\mu)  ,  (\epsilon /A) \| G \| _ {L_2(\mu)  }   \right) \right) } \, \diff\epsilon \\
& = A \int _ 0 ^{\delta _n/A} \sqrt{\log \left( \mathcal N_{[\,]} \left(\mathcal G,  L_2(\mu)  ,  \epsilon  \| G \| _ {L_2(\mu)  }   \right) \right) } \, \diff \epsilon,
\end{align*}
but the previous upper bound goes to $0$ by assumption.
\qed

\bigskip

\noindent{\textbf{Acknowledgments.}}
The author would like to express his gratitude to Johan Segers and Aigerim Zhuman for spotting a mistake in the statement of Lemma \ref{prop:tau} in an earlier version of the paper.

\bibliographystyle{chicago}
\bibliography{revision_F.bbl}

\begin{thebibliography}{}

\bibitem[\protect\citeauthoryear{Anthony and Shawe-Taylor}{Anthony and
  Shawe-Taylor}{1993}]{anthony1993result}
Anthony, M. and J.~Shawe-Taylor (1993).
\newblock A result of {V}apnik with applications.
\newblock {\em Discrete Appl. Math.\/}~{\em 47\/}(3), 207--217.

\bibitem[\protect\citeauthoryear{Ausset, Cl{\'e}men{\c{c}}on, and
  Portier}{Ausset et~al.}{2021}]{ausset2021nearest}
Ausset, G., S.~Cl{\'e}men{\c{c}}on, and F.~Portier (2021).
\newblock Nearest neighbour based estimates of gradients: Sharp nonasymptotic
  bounds and applications.
\newblock In {\em AISTATS Proceedings}, pp.\  532--540. PMLR.

\bibitem[\protect\citeauthoryear{Ausset, Cl{\'e}men{\c{c}}on, and
  Portier}{Ausset et~al.}{2022}]{ausset2022empirical}
Ausset, G., S.~Cl{\'e}men{\c{c}}on, and F.~Portier (2022).
\newblock Empirical risk minimization under random censorship.
\newblock {\em J. Mach. Learn. Res.\/}~{\em 23\/}(5), 1--59.

\bibitem[\protect\citeauthoryear{Bartlett, Bousquet, and Mendelson}{Bartlett
  et~al.}{2005}]{bartlett2005local}
Bartlett, P., O.~Bousquet, and S.~Mendelson (2005).
\newblock Local rademacher complexities.
\newblock {\em Ann. Statist.\/}~{\em 33\/}(4), 1497--1537.

\bibitem[\protect\citeauthoryear{Beran}{Beran}{1981}]{beran1981nonparametric}
Beran, R. (1981).
\newblock Nonparametric regression with randomly censored survival data.
\newblock {\em Technical Report, Univ. California, Berkeley\/}.

\bibitem[\protect\citeauthoryear{Biau, C{\'e}rou, and Guyader}{Biau
  et~al.}{2010}]{biau2010rate}
Biau, G., F.~C{\'e}rou, and A.~Guyader (2010).
\newblock On the rate of convergence of the bagged nearest neighbor estimate.
\newblock {\em J. Mach. Learn. Res.\/}~{\em 11\/}(2).

\bibitem[\protect\citeauthoryear{Biau and Devroye}{Biau and
  Devroye}{2015}]{biau2015lectures}
Biau, G. and L.~Devroye (2015).
\newblock {\em Lectures on the Nearest Neighbor Method}.
\newblock Springer Series in the Data Sciences. Springer, Cham.

\bibitem[\protect\citeauthoryear{Bickel and Breiman}{Bickel and
  Breiman}{1983}]{bickel1983sums}
Bickel, P.~J. and L.~Breiman (1983).
\newblock Sums of functions of nearest neighbor distances, moment bounds, limit
  theorems and a goodness of fit test.
\newblock {\em The Annals of Probability\/}~{\em 11\/}(1), 185 -- 214.

\bibitem[\protect\citeauthoryear{Boucheron, Bousquet, and Lugosi}{Boucheron
  et~al.}{2005}]{boucheron2005theory}
Boucheron, S., O.~Bousquet, and G.~Lugosi (2005).
\newblock Theory of classification: a survey of some recent advances.
\newblock {\em ESAIM Probab. Stat.\/}~{\em 9}, 323--375.

\bibitem[\protect\citeauthoryear{Boucheron, Lugosi, and Massart}{Boucheron
  et~al.}{2013}]{boucheron2013concentration}
Boucheron, S., G.~Lugosi, and P.~Massart (2013).
\newblock {\em Concentration Inequalities. A Nonasymptotic Theory of
  Independence}.
\newblock Oxford University Press, Oxford.

\bibitem[\protect\citeauthoryear{Bousquet}{Bousquet}{2002}]{bousquet2002bennett}
Bousquet, O. (2002).
\newblock A {B}ennett concentration inequality and its application to suprema
  of empirical processes.
\newblock {\em C. R. Math. Acad. Sci. Paris\/}~{\em 334\/}(6), 495--500.

\bibitem[\protect\citeauthoryear{Cannings, Berrett, and Samworth}{Cannings
  et~al.}{2020}]{cannings2020local}
Cannings, T.~I., T.~B. Berrett, and R.~J. Samworth (2020).
\newblock Local nearest neighbour classification with applications to
  semi-supervised learning.
\newblock {\em Ann. Statist.\/}~{\em 48\/}(3), 1789--1814.

\bibitem[\protect\citeauthoryear{Chaudhuri and Dasgupta}{Chaudhuri and
  Dasgupta}{2010}]{chaudhuri2010rates}
Chaudhuri, K. and S.~Dasgupta (2010).
\newblock Rates of convergence for the cluster tree.
\newblock In {\em NeurIPS Proceedings}, Volume~23, pp.\  343--351.

\bibitem[\protect\citeauthoryear{Cover and Hart}{Cover and
  Hart}{1967}]{cover1967nearest}
Cover, T. and P.~Hart (1967).
\newblock Nearest neighbor pattern classification.
\newblock {\em IEEE Trans. Inform. Theory\/}~{\em 13\/}(1), 21--27.

\bibitem[\protect\citeauthoryear{Dabrowska}{Dabrowska}{1989}]{dabrowska1989uniform}
Dabrowska, D.~M. (1989).
\newblock Uniform consistency of the kernel conditional kaplan-meier estimate.
\newblock {\em Ann. Statist.\/}~{\em 17\/}(3), 1157--1167.

\bibitem[\protect\citeauthoryear{Datta, Banerjee, Finley, and Gelfand}{Datta
  et~al.}{2016}]{datta2016hierarchical}
Datta, A., S.~Banerjee, A.~O. Finley, and A.~E. Gelfand (2016).
\newblock Hierarchical nearest-neighbor gaussian process models for large
  geostatistical datasets.
\newblock {\em J. Amer. Statist. Assoc.\/}~{\em 111\/}(514), 800--812.

\bibitem[\protect\citeauthoryear{Devroye}{Devroye}{1978}]{devroye1978uniform}
Devroye, L. (1978).
\newblock The uniform convergence of nearest neighbor regression function
  estimators and their application in optimization.
\newblock {\em IEEE Trans. Inform. Theory\/}~{\em 24\/}(2), 142--151.

\bibitem[\protect\citeauthoryear{Devroye, Gy\"{o}rfi, Krzy\.{z}ak, and
  Lugosi}{Devroye et~al.}{1994}]{devroye1994strong}
Devroye, L., L.~Gy\"{o}rfi, A.~Krzy\.{z}ak, and G.~Lugosi (1994).
\newblock On the strong universal consistency of nearest neighbor regression
  function estimates.
\newblock {\em Ann. Statist.\/}~{\em 22\/}(3), 1371--1385.

\bibitem[\protect\citeauthoryear{Devroye, Gy{\"o}rfi, Lugosi, and Walk}{Devroye
  et~al.}{2017}]{devroye2017measure}
Devroye, L., L.~Gy{\"o}rfi, G.~Lugosi, and H.~Walk (2017).
\newblock On the measure of voronoi cells.
\newblock {\em J. Appl. Probab.\/}~{\em 54\/}(2), 394--408.

\bibitem[\protect\citeauthoryear{Devroye, Gy{\"o}rfi, Lugosi, and Walk}{Devroye
  et~al.}{2018}]{devroye2018nearest}
Devroye, L., L.~Gy{\"o}rfi, G.~Lugosi, and H.~Walk (2018).
\newblock A nearest neighbor estimate of the residual variance.
\newblock {\em Electron. J. Stat.\/}~{\em 12\/}(1), 1752--1778.

\bibitem[\protect\citeauthoryear{Devroye and Wise}{Devroye and
  Wise}{1980}]{devroye1980consistency}
Devroye, L. and G.~L. Wise (1980).
\newblock Consistency of a recursive nearest neighbor regression function
  estimate.
\newblock {\em J. Multivariate Anal.\/}~{\em 10\/}(4), 539--550.

\bibitem[\protect\citeauthoryear{Devroye and Wagner}{Devroye and
  Wagner}{1977}]{devroye1977strong}
Devroye, L.~P. and T.~J. Wagner (1977).
\newblock The strong uniform consistency of nearest neighbor density estimates.
\newblock {\em Ann. Statist.\/}, 536--540.

\bibitem[\protect\citeauthoryear{D{\"o}ring, Gy{\"o}rfi, and Walk}{D{\"o}ring
  et~al.}{2017}]{doring2017rate}
D{\"o}ring, M., L.~Gy{\"o}rfi, and H.~Walk (2017).
\newblock Rate of convergence of $k$-nearest-neighbor classification rule.
\newblock {\em J. Mach. Learn. Res.\/}~{\em 18\/}(1), 8485--8500.

\bibitem[\protect\citeauthoryear{Dudley}{Dudley}{1967}]{dudley1967sizes}
Dudley, R.~M. (1967).
\newblock The sizes of compact subsets of {H}ilbert space and continuity of
  {G}aussian processes.
\newblock {\em J. Funct. Anal.\/}~{\em 1}, 290--330.

\bibitem[\protect\citeauthoryear{Einmahl and Mason}{Einmahl and
  Mason}{2000}]{einmahl2000empirical}
Einmahl, U. and D.~M. Mason (2000).
\newblock An empirical process approach to the uniform consistency of
  kernel-type function estimators.
\newblock {\em J. Theoret. Probab.\/}~{\em 13\/}(1), 1--37.

\bibitem[\protect\citeauthoryear{Fan and Gijbels}{Fan and
  Gijbels}{1996}]{fan1996}
Fan, J. and I.~Gijbels (1996).
\newblock {\em Local Polynomial Modelling and its Applications}.
\newblock Monographs on Statistics and Applied Probability. Chapman \& Hall,
  London.

\bibitem[\protect\citeauthoryear{Fix and Hodges}{Fix and
  Hodges}{1951}]{fix1989discriminatory}
Fix, E. and J.~L. Hodges (1951).
\newblock Discriminatory analysis. nonparametric discrimination: Consistency
  properties.
\newblock {\em Int. Stat. Rev.\/}~{\em 57\/}(3), 238--247.

\bibitem[\protect\citeauthoryear{Gadat, Klein, and Marteau}{Gadat
  et~al.}{2016}]{gadat2016classification}
Gadat, S., T.~Klein, and C.~Marteau (2016).
\newblock Classification in general finite dimensional spaces with the
  {$k$}-nearest neighbor rule.
\newblock {\em Ann. Statist.\/}~{\em 44\/}(3), 982--1009.

\bibitem[\protect\citeauthoryear{Gin\'{e} and Guillou}{Gin\'{e} and
  Guillou}{2001}]{gineConsistencyKernelDensity2001}
Gin\'{e}, E. and A.~Guillou (2001).
\newblock On consistency of kernel density estimators for randomly censored
  data: rates holding uniformly over adaptive intervals.
\newblock {\em Ann. Inst. H. Poincar\'{e} Probab. Statist.\/}~{\em 37\/}(4),
  503--522.

\bibitem[\protect\citeauthoryear{Gin{\'e} and Guillou}{Gin{\'e} and
  Guillou}{2002}]{gine+g:02}
Gin{\'e}, E. and A.~Guillou (2002).
\newblock Rates of strong uniform consistency for multivariate kernel density
  estimators.
\newblock {\em Ann. Inst. Henri Poincar\'{e} Probab. Stat.\/}~{\em 38\/}(6),
  907--921.
\newblock En l'honneur de J. Bretagnolle, D. Dacunha-Castelle, I. Ibragimov.

\bibitem[\protect\citeauthoryear{Gin\'{e} and Nickl}{Gin\'{e} and
  Nickl}{2009}]{gine2009}
Gin\'{e}, E. and R.~Nickl (2009).
\newblock An exponential inequality for the distribution function of the kernel
  density estimator, with applications to adaptive estimation.
\newblock {\em Probab. Theory Related Fields\/}~{\em 143\/}(3-4), 569--596.

\bibitem[\protect\citeauthoryear{Goix, Sabourin, Cl{\'e}men, et~al.}{Goix
  et~al.}{2015}]{goix2015learning}
Goix, N., A.~Sabourin, S.~Cl{\'e}men, et~al. (2015).
\newblock Learning the dependence structure of rare events: a non-asymptotic
  study.
\newblock In {\em Conference on Learning Theory}, pp.\  843--860. PMLR.

\bibitem[\protect\citeauthoryear{Gy{\"o}rfi}{Gy{\"o}rfi}{1981}]{gyorfi1981rate}
Gy{\"o}rfi, L. (1981).
\newblock The rate of convergence of $k_n$-{NN} regression estimates and
  classification rules.
\newblock {\em IEEE Trans. Inform. Theory\/}~{\em 27\/}(3), 362--364.

\bibitem[\protect\citeauthoryear{Gy{\"o}rfi, Kohler, Krzyzak, and
  Walk}{Gy{\"o}rfi et~al.}{2006}]{gyorfi2006distribution}
Gy{\"o}rfi, L., M.~Kohler, A.~Krzyzak, and H.~Walk (2006).
\newblock {\em A Distribution-Free Theory of Nonparametric Regression}.
\newblock Springer Science \& Business Media.

\bibitem[\protect\citeauthoryear{Hagerup and R{\"u}b}{Hagerup and
  R{\"u}b}{1990}]{hagerup1990guided}
Hagerup, T. and C.~R{\"u}b (1990).
\newblock A guided tour of chernoff bounds.
\newblock {\em Information processing letters\/}~{\em 33\/}(6), 305--308.

\bibitem[\protect\citeauthoryear{Hansen}{Hansen}{2008}]{hansen2008uniform}
Hansen, B.~E. (2008).
\newblock Uniform convergence rates for kernel estimation with dependent data.
\newblock {\em Econometric Theory\/}~{\em 24\/}(3), 726--748.

\bibitem[\protect\citeauthoryear{H\"{a}rdle, Janssen, and Serfling}{H\"{a}rdle
  et~al.}{1988}]{hardle1988strong}
H\"{a}rdle, W., P.~Janssen, and R.~Serfling (1988).
\newblock Strong uniform consistency rates for estimators of conditional
  functionals.
\newblock {\em Ann. Statist.\/}~{\em 16\/}(4), 1428--1449.

\bibitem[\protect\citeauthoryear{H\"{a}rdle and Tsybakov}{H\"{a}rdle and
  Tsybakov}{1988}]{hardle1988robust}
H\"{a}rdle, W. and A.~B. Tsybakov (1988).
\newblock Robust nonparametric regression with simultaneous scale curve
  estimation.
\newblock {\em Ann. Statist.\/}~{\em 16\/}(1), 120--135.

\bibitem[\protect\citeauthoryear{Haussler}{Haussler}{1995}]{haussler1995sphere}
Haussler, D. (1995).
\newblock Sphere packing numbers for subsets of the boolean n-cube with bounded
  vapnik-chervonenkis dimension.
\newblock {\em Journal of Combinatorial Theory, Series A\/}~{\em 69\/}(2),
  217--232.

\bibitem[\protect\citeauthoryear{Horvath and Yandell}{Horvath and
  Yandell}{1988}]{horvath1988asymptotics}
Horvath, L. and B.~S. Yandell (1988).
\newblock Asymptotics of conditional empirical processes.
\newblock {\em J. Multivariate Anal.\/}~{\em 26\/}(2), 184--206.

\bibitem[\protect\citeauthoryear{Jiang}{Jiang}{2019}]{jiang2019non}
Jiang, H. (2019).
\newblock Non-asymptotic uniform rates of consistency for $k$-{NN} regression.
\newblock In {\em AAAI Proceedings}, Volume~33, pp.\  3999--4006.

\bibitem[\protect\citeauthoryear{Kpotufe}{Kpotufe}{2011}]{kpotufe2011k}
Kpotufe, S. (2011).
\newblock k-{NN} regression adapts to local intrinsic dimension.
\newblock In {\em NeurIPS Proceedings}, Volume~24, pp.\  729--737.

\bibitem[\protect\citeauthoryear{Lhaut, Sabourin, and Segers}{Lhaut
  et~al.}{2022}]{lhaut2022uniform}
Lhaut, S., A.~Sabourin, and J.~Segers (2022).
\newblock Uniform concentration bounds for frequencies of rare events.
\newblock {\em Statistics \& Probability Letters\/}~{\em 189}, 109610.

\bibitem[\protect\citeauthoryear{Lugosi}{Lugosi}{2002}]{lugosi2002pattern}
Lugosi, G. (2002).
\newblock Pattern classification and learning theory.
\newblock In {\em Principles of Nonparametric Learning}, pp.\  1--56. Springer.

\bibitem[\protect\citeauthoryear{Mack}{Mack}{1981}]{mack1981local}
Mack, Y.-P. (1981).
\newblock Local properties of $k$-{NN} regression estimates.
\newblock {\em SIAM J. Algebraic Discrete Methods\/}~{\em 2\/}(3), 311--323.

\bibitem[\protect\citeauthoryear{Massart}{Massart}{2000}]{massart2000constants}
Massart, P. (2000).
\newblock About the constants in talagrand's concentration inequalities for
  empirical processes.
\newblock {\em The Annals of Probability\/}~{\em 28\/}(2), 863--884.

\bibitem[\protect\citeauthoryear{Nadaraya}{Nadaraya}{1964}]{nadaraya1964estimating}
Nadaraya, E.~A. (1964).
\newblock On estimating regression.
\newblock {\em Theory Probab. Appl.\/}~{\em 9\/}(1), 141--142.

\bibitem[\protect\citeauthoryear{Nolan and Pollard}{Nolan and
  Pollard}{1987}]{nolan+p:1987}
Nolan, D. and D.~Pollard (1987).
\newblock {$U$}-processes: rates of convergence.
\newblock {\em Ann. Statist.\/}~{\em 15\/}(2), 780--799.

\bibitem[\protect\citeauthoryear{Plassier, Portier, and Segers}{Plassier
  et~al.}{2023}]{plassier2023risk}
Plassier, V., F.~Portier, and J.~Segers (2023).
\newblock Risk bounds when learning infinitely many response functions by
  ordinary linear regression.
\newblock {\em Ann. Inst. Henri Poincar\'{e} Probab. Stat.\/}~{\em 59\/}(1),
  53--78.

\bibitem[\protect\citeauthoryear{Qiao, Duan, and Cheng}{Qiao
  et~al.}{2019}]{qiao2019rates}
Qiao, X., J.~Duan, and G.~Cheng (2019).
\newblock Rates of convergence for large-scale nearest neighbor classification.
\newblock In {\em NeurIPS Proceedings}, Volume~32, pp.\  10769--10780.

\bibitem[\protect\citeauthoryear{Royall}{Royall}{1966}]{royall1966class}
Royall, R.~M. (1966).
\newblock {\em A class of Non-Parametric Estimate of a Smooth Regression}.
\newblock ProQuest LLC, Ann Arbor, MI.
\newblock Thesis (Ph.D.)--Stanford University.

\bibitem[\protect\citeauthoryear{Stone}{Stone}{1977}]{stone1977consistent}
Stone, C.~J. (1977).
\newblock Consistent nonparametric regression.
\newblock {\em Ann. Statist.\/}~{\em 5\/}(1), 595--620.

\bibitem[\protect\citeauthoryear{Stone}{Stone}{1982}]{stone1982optimal}
Stone, C.~J. (1982).
\newblock Optimal global rates of convergence for nonparametric regression.
\newblock {\em Ann. Statist.\/}~{\em 10\/}(4), 1040--1053.

\bibitem[\protect\citeauthoryear{Stute}{Stute}{1986}]{stute1986conditional}
Stute, W. (1986).
\newblock Conditional empirical processes.
\newblock {\em Ann. Statist.\/}~{\em 14\/}(2), 638--647.

\bibitem[\protect\citeauthoryear{Talagrand}{Talagrand}{1996}]{talagrandNewConcentrationInequalities1996}
Talagrand, M. (1996).
\newblock New concentration inequalities in product spaces.
\newblock {\em Invent. Math.\/}~{\em 126\/}(3), 505--563.

\bibitem[\protect\citeauthoryear{Van Der~Vaart and Wellner}{Van Der~Vaart and
  Wellner}{1996}]{wellner1996}
Van Der~Vaart, A.~W. and J.~A. Wellner (1996).
\newblock {\em Weak Convergence and Empirical Processes. With Applications to
  Statistics}.
\newblock Springer Series in Statistics. New York: Springer-Verlag.

\bibitem[\protect\citeauthoryear{Van Der~Vaart and Wellner}{Van Der~Vaart and
  Wellner}{2007}]{wellner2007}
Van Der~Vaart, A.~W. and J.~A. Wellner (2007).
\newblock Empirical processes indexed by estimated functions.
\newblock In {\em Asymptotics: Particles, Processes and Inverse Problems},
  Volume~55 of {\em IMS Lecture Notes Monogr. Ser.}, pp.\  234--252. Inst.
  Math. Statist.

\bibitem[\protect\citeauthoryear{Van~Handel}{Van~Handel}{2014}]{van2014probability}
Van~Handel, R. (2014).
\newblock Probability in high dimension.
\newblock {\em Lecture Notes (Princeton University)\/}.

\bibitem[\protect\citeauthoryear{Vapnik and Chervonenkis}{Vapnik and
  Chervonenkis}{2015}]{vapnik2015uniform}
Vapnik, V.~N. and A.~Y. Chervonenkis (2015).
\newblock On the uniform convergence of relative frequencies of events to their
  probabilities.
\newblock In {\em Measures of Complexity}, pp.\  11--30. Springer, Cham.
\newblock Reprint of Theor. Probability Appl. {\bf 16} (1971), 264--280.

\bibitem[\protect\citeauthoryear{Veraverbeke, Omelka, and Gijbels}{Veraverbeke
  et~al.}{2011}]{veraverbeke2011estimation}
Veraverbeke, N., M.~Omelka, and I.~Gijbels (2011).
\newblock Estimation of a conditional copula and association measures.
\newblock {\em Scand. J. Stat.\/}~{\em 38\/}(4), 766--780.

\bibitem[\protect\citeauthoryear{Watson}{Watson}{1964}]{watson1964smooth}
Watson, G.~S. (1964).
\newblock Smooth regression analysis.
\newblock {\em Sankhy\={a} Ser. A\/}~{\em 26}, 359--372.

\bibitem[\protect\citeauthoryear{Weinberger, Blitzer, and Saul}{Weinberger
  et~al.}{2006}]{weinberger2006distance}
Weinberger, K.~Q., J.~Blitzer, and L.~K. Saul (2006).
\newblock Distance metric learning for large margin nearest neighbor
  classification.
\newblock In {\em NeurIPS Proceedings}, Volume~18, pp.\  1473--1480.

\bibitem[\protect\citeauthoryear{Wenocur and Dudley}{Wenocur and
  Dudley}{1981}]{wenocur1981some}
Wenocur, R.~S. and R.~M. Dudley (1981).
\newblock Some special {V}apnik-{C}hervonenkis classes.
\newblock {\em Discrete Math.\/}~{\em 33\/}(3), 313--318.

\bibitem[\protect\citeauthoryear{Zhang}{Zhang}{2023}]{zhang2023mathematical}
Zhang, T. (2023).
\newblock {\em Mathematical Analysis of Machine Learning Algorithms}.
\newblock Cambridge University Press.

\end{thebibliography}

\end{document}